\documentclass[11pt]{article}
\PassOptionsToPackage{obeyspaces}{url}
\usepackage[hidelinks]{hyperref}
\usepackage{bookmark}
\bookmarksetup{open,numbered,depth=5}
\usepackage{amsfonts}
\usepackage{amsmath}
\usepackage{amssymb, amsthm, enumitem}
\usepackage{mathrsfs}
\usepackage{pgf,tikz}
\usepackage{bm}
\usepackage{array}
\usepackage{abraces}

\usetikzlibrary{calc}

\usepackage{etoolbox} 
\patchcmd{\thebibliography}{\leftmargin\labelwidth}{\leftmargin\labelwidth\addtolength\itemsep{-0.2\baselineskip}}{}{}

\usepackage{mathtools}
\usepackage{xstring}

\author{
Zichao Dong\thanks{Extremal Combinatorics and Probability Group (ECOPRO), Institute for Basic Science (IBS), Daejeon, South Korea. Supported by the Institute for Basic Science (IBS-R029-C4). \texttt{zichao@ibs.re.kr}. }
\and Minghui Ouyang\thanks{School of Mathematical Sciences, Peking University, Beijing 100871, China. \texttt{ouyangminghui1998@gmail.com}.}
\and Lan Wei\thanks{Mathematical Research Center, Shandong University, Jinan 250100, China \texttt{lanwei@mail.sdu.edu.cn}.}
}

\title{Sharp bounds on $k$-wise generalizations of oddtowns and eventowns}
\date{}

\usepackage[nameinlink]{cleveref}

\newtheorem{theorem}{Theorem}
\newtheorem{lemma}[theorem]{Lemma}
\newtheorem{corollary}[theorem]{Corollary}

\newtheorem{proposition}[theorem]{Proposition}

\newtheorem{problem}[theorem]{Problem}

\newcommand*{\eqdef}{\stackrel{\mbox{\normalfont\tiny def}}{=}} 

\newcommand*{\N}{\mathbb{N}}                                    
\newcommand*{\Z}{\mathbb{Z}}                                    
\newcommand*{\R}{\mathbb{R}}                                    
\newcommand*{\F}{\mathbb{F}}
\renewcommand*{\=}{\overset{\F_2}{=}}

\renewcommand*{\sc}{\mathsf{c}}
\newcommand*{\cF}{\mathcal{F}}
\newcommand*{\cG}{\mathcal{G}}
\newcommand*{\cA}{\mathcal{A}}
\newcommand*{\cB}{\mathcal{B}}
\newcommand*{\valpha}{{\bm{\alpha}}}
\newcommand*{\vbeta}{{\bm{\beta}}}
\newcommand*{\vgamma}{{\bm{\gamma}}}

\newcommand*{\vI}{{\bm{I}}}
\newcommand*{\mGamma}{{\bm{\Gamma}}}
\newcommand*{\mM}{{\bm{M}}}
\newcommand*{\wF}{{\widetilde{\F}}}

\DeclareMathOperator*{\lv}{lv}
\DeclareMathOperator*{\grd}{grd}
\DeclareMathOperator*{\spa}{span}

\allowdisplaybreaks[4]

\begin{document}

\maketitle

\begin{abstract}
    For $\valpha = (\alpha_1, \dots, \alpha_k) \in \F_2^k$, an $\valpha$-\emph{town} is a set family in which every $i$-wise intersection has parity $\alpha_i$. Denote by $f_{\valpha}(n)$ the maximum size of an $\valpha$-town on $[n]$. The classical oddtown and eventown problems study the cases $\valpha = (1, 0)$ and $(0, 0)$, respectively. We determine the sharp asymptotics of $f_{\valpha}(n)$ for all $\valpha$, answering questions of Johnston--O'Neill and Wei--Zhang--Ge. 
    
    We also study a symmetric variant $g_{\valpha}(n)$, in which $i$-wise intersection sizes $|F_1 \cap \dots \cap F_i|$ are replaced by $i$-wise intersection-union sizes $|F_1 \cap \dots \cap F_i| + |F_1 \cup \dots \cup F_i|$. 
\end{abstract}

\section{Introduction} \label{sec:intro}


Restricted-intersection problems have long been a central topic in the study of extremal set theory and have found numerous striking applications. For each $n \in \N_+ \eqdef \{1, 2, \dots\}$, write $[n] \eqdef \{1, \dots, n\}$ and let $2^{[n]}$ denote the power set of $[n]$. As a classical example of a restricted-intersection result: 

\vspace{-0.25em}
\begin{center}
\begin{minipage}{0.9\textwidth}
    \emph{Suppose $p \in \N_+$ is a prime, and let $\cF \subseteq 2^{[n]}$ be a family of $(2p-1)$-subsets of $[n]$. If no two distinct sets in $\cF$ intersect in exactly $p-1$ elements, then $|\cF| \le \binom{n}{0} + \binom{n}{1} + \dots + \binom{n}{p}$.} 
\end{minipage}
\end{center}
\vspace{-0.25em}

\noindent This is a special case of the celebrated Frankl--Wilson theorem~\cite{1981F-W}. Remarkably, it yields exponentially large explicit constructions for diagonal Ramsey numbers; see~\cite{matousek} for an elegant exposition. Restricted-intersection results, including~\cite{deza_erdos_frankl,1981F-W,babai_frankl_kutin_stefankovic}, apply to distance-related coding theory problems; see~\cite{1995Babaicode,2002Sudakovdistance,dong_gao_liu_ouyang_zhou}. We also refer to~\cite{2018Franklsurvey} for a comprehensive survey of restricted-intersection problems. 


The oddtown and eventown problems provide two famous examples of restricted-intersections in which the constraints are imposed on the parities of set sizes and pairwise intersection sizes. Call $\cF \subseteq 2^{[n]}$ an \emph{eventown} (resp.~\emph{oddtown}) if every set in $\cF$ has even (resp.~odd) size, and the intersection of any two distinct sets in $\cF$ has even size. Berlekamp \cite{1969Berlekamp} and Graver \cite{1975Graver} obtained sharp extremal bounds: every oddtown contains at most $n$ sets, while every eventown contains at most $2^{\lfloor n/2\rfloor}$ sets. The proofs are early paradigms of \emph{linear algebraic methods} in combinatorics. Recently, the oddtown problem has also found unexpected applications to multicolor Ramsey constructions~\cite{2021Conlon-FeberRamsey,2021Wigderson}.



Consider a natural analogue---$\ell$-\emph{oddtown}: a family $\cF \subseteq 2^{[n]}$ such that $|A| \not\equiv 0 \pmod \ell$ for any $A \in \cF$ and $|A \cap B| \equiv 0 \pmod \ell$ for any distinct $A, B \in \cF$. For prime $\ell$, and more generally prime-power $\ell$, the sharp bound $|\cF| \le n$ is known, but the case of composite $\ell$ is much less understood. Szegedy~\cite[Ex.~1.1.28]{1992babai-frankl} has established the first improvement over the trivial bounds for composite moduli, and the recent work of Bukh, Chao, and Zheng~\cite{2025Bukhcomposite} gives further improved bounds. 

The study of $\ell$-\emph{divisible} families, where pairwise or $i$-wise intersection sizes are restricted to be divisible by $\ell$, has also attracted considerable attention; see  \cite{deza_erdos_frankl,1983Frankl-Odlyzko,2005Szabo-vu,2022Gishboliner-Sudakov-Tomon}. Many other extensions of oddtowns and eventowns have been developed over the years~\cite{1983Deza5,1997Vu_odd,1999Vu,2004Furedi-Sudakov,2023ONeill,2026Dey}. 

\medskip

In this paper, we study generalizations of oddtowns and eventowns in which every $i$-wise intersection size in a set family has restricted parity, as introduced by Johnston and O'Neill~\cite{johnston_oneill}. 
Several previous partial results were obtained, see \cite{oneill_verstraete,2018Sudakov_Vieira}. 
Recall standard asymptotic notations. Let $f, g \colon \N_+ \to \R_{\ge 0}$ be functions with finitely many zeroes. Write
\vspace{-0.5em}
\begin{itemize}
    \item $f = o(g)$ or $g = \omega(f)$ if $\lim\limits_{n \to \infty} \frac{f(n)}{g(n)} = 0$, and $f \sim g$ if $\lim\limits_{n \to \infty} \frac{f(n)}{g(n)} = 1$; 
    \vspace{-0.5em}
    \item $f = O(g)$ or $g = \Omega(f)$ if there exists $c > 0$ such that $f(n) \le cg(n) \, (\forall n \in \N_+)$; 
    \vspace{-0.5em}
    \item $f = \Theta(g)$ if both $f = O(g)$ and $g = O(f)$ hold. 
\end{itemize}
\vspace{-0.5em}

\paragraph{The \texorpdfstring{$\valpha$}{alpha}-town problem.} For $\valpha = (\alpha_1, \dots, \alpha_k) \in \F_2^k$, we refer to a set family $\cF$ as an $\valpha$-\emph{town} if, for $i = 1, \dots, k$, the equality $|F_1 \cap \dots \cap F_i| \equiv \alpha_i \pmod 2$ holds for any distinct $F_1, \dots, F_i \in \cF$. For instance, an oddtown is a $(1, 0)$-town, and an eventown is a $(0, 0)$-town.

For $\cF \subseteq 2^{[n]}$, define $f_{\valpha}(n)$ as the maximum size of $|\cF|$, provided that $\cF$ is an $\valpha$-town. Then, the oddtown theorem and eventown theorem assert $f_{(1, 0)}(n) = n$ and $f_{(0, 0)}(n) = 2^{\lfloor n/2 \rfloor}$, respectively. 

In their pioneering study of $f_{\valpha}(n)$, Johnston and O'Neill~\cite{johnston_oneill} found the exact values of $f_{\valpha}(n)$ for all $\valpha \in \F_2^2 \cup \F_2^3$, and obtained sharp asymptotics for each $\valpha \in \F_2^4$. Improving an O'Neill--Verstra\"{e}te result~\cite{oneill_verstraete}, Wei, Zhang, and Ge~\cite{wei_zhang_ge} proved $f_{\valpha}(n) \sim (t!n)^{1/t}$ for every $\valpha \in \F_2^k$ whose last $1$-coordinate appears before the $\frac{k}{2}$-th bit. In the same paper, they also showed that, for every $c > 0$, the number of $\valpha \in \F_2^k$ satisfying $f_{\valpha}(n) = \Omega(n^c)$ is bounded by a constant independent of $k$. 

Wei, Zhang, and Ge~\cite[Problem VI.2]{wei_zhang_ge} posed the following problem. 

\begin{problem} \label{prob:main}
    Determine the sharp asymptotic behavior of $f_\valpha(n)$ for every $k \in \N_+$ and every $\valpha \in \F_2^k$. 
\end{problem}

As an intermediate step toward \Cref{prob:main}, Johnston and O'Neill~\cite[Question 1]{johnston_oneill} asked whether there exists an absolute constant $C$ such that, for every $k \in \N_+$, there are at most $C$ vectors $\valpha \in \F_2^k$ satisfying $f_{\valpha}(n) = \Omega(\sqrt{n})$. Wei, Zhang, and Ge~\cite[Problem VI.1]{wei_zhang_ge} answered this question affirmatively by showing that one may take $C = 2^7$, and further asked for the optimal value of $C$. 

Our main result (\Cref{thm:main}) resolves \Cref{prob:main}. The full statement of the theorem is somewhat technical, however, and requires some linear-algebraic preliminaries. In particular, we shall see that $f_{\valpha}(n) = \Omega(\sqrt{n})$ if and only if $\lv(\valpha) \le 2$, from which it follows that the optimal value of $C$ is $2^5$. 


\medskip

Write $\F_2^{\otimes \omega} \eqdef \prod_{j=1}^{\infty} \F_2$ as the countable direct product of copies of $\F_2$. That is, the vector space of all infinite binary sequences. For every such vector $\vgamma \in \F_2^{\otimes \omega}$, we denote
\vspace{-0.5em}
\begin{itemize}
	\item by $(\vgamma)_j$ (or $\vgamma_j$) the $j$-th coordinate of the vector $\vgamma$, 
    \vspace{-0.5em}
	\item by $(\vgamma)_{[k]}$ (or $\vgamma_{[k]}$) the vector $(\vgamma_1, \dots, \vgamma_k) \in \F_2^k \, (\forall k)$, and
    \vspace{-0.5em}
	\item by $\sigma(\vgamma) \in \F_2^{\otimes \omega}$ in which $\sigma(\vgamma)_j \eqdef \vgamma_{j+1} \, (\forall j)$ the left shift of $\vgamma$. 
\end{itemize}
\vspace{-0.5em}
Introduce an infinite sequence of \emph{canonical vectors} $\vgamma^1, \vgamma^2, \dots \in \F_2^{\otimes \omega}$, where for every $i, j$ we have
\[
(\vgamma^{2i-1})_j \eqdef \binom{-i}{j-1} \bmod 2 = \binom{i+j-2}{j-1} \bmod 2, \qquad (\vgamma^{2i})_j \eqdef \binom{i-1}{j-1} \bmod 2. 
\]
Notice that we use the extended binomial coefficients. That is, for any $a \in \Z$ and $b \in \Z_{\ge 0}$, 
\[
\binom{-a}{b} \eqdef \frac{(-a)(-a-1)\dots(-a-b+1)}{b!}. 
\]
The first canonical vector $\vgamma^1$ is the infinite all-$1$ vector, and the second through the seventh are
\begin{align*}
    \vgamma^2 = (1, 0, 0, 0, 0, 0, 0, 0, 0, 0, 0, 0, \dots), &\qquad \vgamma^3 = (\aunderbrace[l1r]{1, 0}, \aunderbrace[l1r]{1, 0}, \aunderbrace[l1r]{1, 0}, \aunderbrace[l1r]{1, 0}, \aunderbrace[l1r]{1, 0}, \aunderbrace[l1r]{1, 0}, \dots), \\
    \vgamma^4 = (1, 1, 0, 0, 0, 0, 0, 0, 0, 0, 0, 0, \dots), &\qquad \vgamma^5 = (\aunderbrace[l1r]{1, 1, 0, 0}, \aunderbrace[l1r]{1, 1, 0, 0}, \aunderbrace[l1r]{1, 1, 0, 0}, \dots), \\
    \vgamma^6 = (1, 0, 1, 0, 0, 0, 0, 0, 0, 0, 0, 0, \dots), &\qquad \vgamma^7 = (\aunderbrace[l1r]{1, 0, 0, 0}, \aunderbrace[l1r]{1, 0, 0, 0}, \aunderbrace[l1r]{1, 0, 0, 0}, \dots). 
\end{align*}

\medskip

Informally, the asymptotics of $f_{\valpha}(n)$ is determined by the decomposition of $\valpha$ into the canonical vectors. Before giving a precise formulation, we require the following linear independence result. 

\begin{lemma} \label{lem:canonical}
    For every $k \in \N_+$, the vectors $(\vgamma^1)_{[k]}, \dots, (\vgamma^k)_{[k]}$ form a basis of $\F_2^k$. 
\end{lemma}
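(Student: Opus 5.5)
The plan is to encode vectors as truncated power series over $\F_2$. Identify $\F_2^k$ with the quotient ring $R_k \eqdef \F_2[x]/(x^k)$ via the linear isomorphism $\Phi(\vgamma_{[k]}) \eqdef \sum_{j=1}^{k} \vgamma_j x^{j-1}$. Under $\Phi$, the truncated canonical vectors become familiar elements of $R_k$.

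For $\vgamma^{2i}$, the binomial theorem gives
\[
\Phi\bigl((\vgamma^{2i})_{[k]}\bigr) = \sum_{j \ge 1} \binom{i-1}{j-1} x^{j-1} = (1+x)^{i-1} \bmod x^k .
\]
For $\vgamma^{2i-1}$, the extended binomial theorem $(1+x)^{-i} = \sum_{j\ge1}\binom{-i}{j-1}x^{j-1}$ holds in $\Z[[x]]$. Reducing mod $2$, and using that $1+x$ is a unit in $R_k$, gives
\[
\Phi\bigl((\vgamma^{2i-1})_{[k]}\bigr) = (1+x)^{-i}.
\]
The index $2i-1 \le k$ means $1 \le i \le \lceil k/2\rceil$, and $2i \le k$ means $1 \le i \le \lfloor k/2 \rfloor$. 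So the $k$ images are
\[
(1+x)^{-1}, \dots, (1+x)^{-\lceil k/2\rceil} \quad\text{and}\quad (1+x)^{0}, \dots, (1+x)^{\lfloor k/2\rfloor - 1}.
\]

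Next, multiply everything by the unit $u \eqdef (1+x)^{\lceil k/2\rceil}$. Multiplication by $u$ is an invertible linear map on $R_k$, so it preserves linear independence. The odd-indexed images become $(1+x)^{e}$ for $e = \lceil k/2\rceil - i$, which ranges over $0, \dots, \lceil k/2\rceil - 1$. The even-indexed images become $(1+x)^{e}$ for $e = \lceil k/2\rceil + i - 1$, which ranges over $\lceil k/2\rceil, \dots, k-1$. Together these are exactly $(1+x)^0, (1+x)^1, \dots, (1+x)^{k-1}$, each appearing once.

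These $k$ polynomials have degrees $0, 1, \dots, k-1$ with leading coefficient $1$. They are therefore unitriangular with respect to the monomial basis $1, x, \dots, x^{k-1}$ of $R_k$, hence form a basis of $R_k$. Pulling back through $u^{-1}$ and $\Phi^{-1}$ shows that $(\vgamma^1)_{[k]}, \dots, (\vgamma^k)_{[k]}$ form a basis of $\F_2^k$.

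The only step needing care is justifying the generating-function identity for $\vgamma^{2i-1}$: that reducing the integer coefficients of $(1+x)^{-i}$ mod $2$ gives the inverse of $(1+x)^i$ in $\F_2[[x]]$. This holds because reduction mod $2$ is a ring homomorphism $\Z[[x]] \to \F_2[[x]]$ and $(1+x)^{-i}(1+x)^i = 1$ already in $\Z[[x]]$. The rest is index bookkeeping.
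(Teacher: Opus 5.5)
Your proof is correct, but it reaches the result by a different mechanism than the paper's.

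The paper also reorders the rows of $\mGamma_k$ into a window of $k$ consecutive ``binomial rows'' $\bigl(\binom{x}{j-1}\bigr)_{j\in[k]}$ with $x\in\{-\lceil k/2\rceil,\dots,\lfloor k/2\rfloor-1\}$. It then proves invertibility for every such window by induction, starting from the lower unitriangular window $x\in\{0,\dots,k-1\}$. Each step shifts the window by one through a single row replacement. That replacement is justified by the Lagrange-interpolation identity $\binom{a}{j-1}=\sum_{s=1}^{k}(-1)^{s-1}\binom{k}{s}\binom{a+s}{j-1}$, whose coefficient on the replaced row is odd.

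You encode the same rows as the powers $(1+x)^{c}$ in $\F_2[x]/(x^k)$. The window becomes $k$ consecutive powers of the unit $1+x$, and one multiplication by $(1+x)^{\lceil k/2\rceil}$ carries it to $(1+x)^0,\dots,(1+x)^{k-1}$ in a single step.

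The two arguments are closely related. The paper's identity is precisely the statement that the coefficients of $x^0,\dots,x^{k-1}$ vanish in $x^k(1+x)^a=\bigl((1+x)-1\bigr)^k(1+x)^a$. So each inductive step in the paper is a hands-on check of what, for you, is simply the invertibility of multiplication by $1+x$.

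\textbf{What your version buys.} It is shorter, and it gives a structural picture of the canonical vectors: $\vgamma^{2i}$ and $\vgamma^{2i-1}$ are just $(1+x)^{i-1}$ and $(1+x)^{-i}$. This makes neighbouring results nearly free. For \Cref{lem:top-even-coordinate}, the truncations of $\vgamma^1,\dots,\vgamma^{2t-1},\vgamma^{2t+1}$ to $[2t]$ correspond to the consecutive exponents $-(t+1),\dots,t-2$, so the same shift argument applies verbatim.

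\textbf{What the paper's version buys.} It stays entirely within matrices and polynomial interpolation. Its explicit row replacement with leading coefficient $1$ is exactly what it reuses in the proof of that corollary.
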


Due to \Cref{lem:canonical}, for every vector $\valpha \in \F_2^k$ there are unique coefficients $c_1, \dots, c_k \in \F_2$ such that
\begin{equation} \label{eq:canonical}
    \valpha = c_1 \cdot (\vgamma^1)_{[k]} + \dots + c_k \cdot (\vgamma^k)_{[k]}. 
\end{equation}
We refer to this representation as the \emph{canonical decomposition} of $\valpha$. 

We need to introduce the \emph{level} and \emph{grade} of a vector $\valpha \in \F_2^k$ with canonical decomposition \eqref{eq:canonical}. Firstly, for each canonical vector $\vgamma^i$, we define its \emph{level} as $\lv(\vgamma^i) \eqdef \lfloor i/2 \rfloor$. Secondly, the \emph{level} of $\valpha$ corresponds to that of the highest canonical vector appearing in its canonical decomposition. That is, $\lv(\valpha) \eqdef \max \bigl\{ \lv(\vgamma^i) : c_i = 1 \bigr\}$. The all-$0$ vector of arbitrary length has level $0$. Note that $\lv$ is an operator from $\bigsqcup\limits_{k = 1}^{\infty} \F_2^k$ to non-negative integers $\Z_{\ge 0}$. If $\lv(\valpha) = t > 0$, then we define its \emph{grade} as
\vspace{-0.75em}
\[
\grd(\valpha) \eqdef \begin{cases}
    1 \qquad &\text{if one of $\vgamma^{2t}$ and $\vgamma^{2t+1}$ appears in \eqref{eq:canonical}, equivalently $c_{2t} \cdot c_{2t+1} = 0$}, \\
    2 \qquad &\text{if both $\vgamma^{2t}$ and $\vgamma^{2t+1}$ appear in \eqref{eq:canonical}, equivalently $c_{2t} \cdot c_{2t+1} = 1$}.
\end{cases}
\]

Our main result below can be informally summarized as ``the highest-level principle'': the theorem asserts that the sharp asymptotics of $f_{\valpha}(n)$ is determined by the level and the grade of $\valpha$. 

\begin{theorem} \label{thm:main}
    Suppose $k$ is a positive integer and let $\valpha \in \F_2^k$. As $n \to \infty$ the following holds. 
    \[
    f_{\valpha}(n) = \begin{cases}
        2^{\lfloor n/2 \rfloor} \qquad &\text{if $\lv(\valpha) = 0$ and $\valpha = (0, \dots, 0)$}, \\
        2^{\lfloor (n-1)/2 \rfloor} \qquad &\text{if $\lv(\valpha) = 0$ and $\valpha = (1, \dots, 1)$}, \\
        \bigl( 1 + o(1) \bigr) \cdot (t!)^{1/t} \cdot n^{1/t} \qquad &\text{if $\lv(\valpha) = t > 0$ and $\grd(\valpha) = 1$}, \\
        \bigl( 1 + o(1) \bigr) \cdot (t!/2)^{1/t} \cdot n^{1/t} \qquad &\text{if $\lv(\valpha) = t > 0$ and $\grd(\valpha) = 2$}. 
    \end{cases}
    \]
\end{theorem}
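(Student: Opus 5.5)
We recast the problem in terms of Venn-region counts. For a family $\cF = \{F_1, \dots, F_m\}$ on $[n]$ and each nonempty $S \subseteq [m]$, let $x_S \in \F_2$ be the parity of the number of elements lying in exactly the sets indexed by $S$. Then
\[
|F_{i_1} \cap \dots \cap F_{i_j}| \equiv \sum_{S \supseteq \{i_1, \dots, i_j\}} x_S \pmod 2 .
\]
Since all $i$-wise conditions are symmetric, it is natural to consider the ``type'' of $\cF$. The key observation is that if a single element lies in exactly $s$ of the $m$ sets, its contribution to the $j$-wise intersection count, summed appropriately, is governed by binomial coefficients $\binom{s}{j}$. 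This is exactly where the canonical vectors $\vgamma^i$, built from $\binom{i-1}{j-1}$ and $\binom{-i}{j-1}$, enter.

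\textbf{Lower bounds.} We build $\cF$ as a union of ``blocks.''
\begin{itemize}
\item For level $t$, choose $m$ with $\binom{m}{t} \approx t!\,n$ up to lower order. Assign to each $t$-subset $T$ of $[m]$ one private element lying in exactly the sets of $T$.
\item Each element lying in exactly $s$ sets contributes, to each $j$-wise intersection with $j \le s$, the value $1$ when $j$ indices are chosen inside. Choosing the block sizes (number of sets containing each element) equal to $s = 1, \dots, t$ with suitable multiplicities realizes the even-indexed canonical vectors $\vgamma^{2i}$, since $\binom{i-1}{j-1}$ counts the $(i-1)$-supersets of a $j$-set inside an $i$-set.
\item The odd vectors $\vgamma^{2i-1}$, with entries $\binom{-i}{j-1}$, should arise from complements. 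Replacing a set $F$ by $F$ together with a common block $B$ shared by all sets, or taking complements within a large common ground set, turns $\binom{s}{j}$-type patterns into $\binom{-s}{j}$-type patterns by the identity $\binom{-i}{j-1} = (-1)^{j-1}\binom{i+j-2}{j-1}$.
\end{itemize}
Linear combinations of these patterns are obtained by disjoint unions of gadgets, and parity is additive over disjoint unions. Since $\valpha$ decomposes by \Cref{lem:canonical}, this yields an $\valpha$-town. The dominant cost is the level-$t$ gadget, which uses about $\binom{m}{t} \approx m^t/t!$ elements, giving $m \sim (t!\,n)^{1/t}$. Lower-level gadgets cost $O(m^{t-1}) = o(n)$.

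When both $\vgamma^{2t}$ and $\vgamma^{2t+1}$ appear (grade $2$), I expect that the two top-level contributions can share the $t$-subsets cleverly. For example, a $t$-subset element and its complement-type counterpart might merge, using only about half the elements, which gives $m^t/t! \approx 2n$. Equivalently, the right choice is elements in exactly $t$ sets and in exactly $m-t$ sets, combined so that one element per $t$-subset suffices. This must be checked carefully.

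\textbf{Level $0$.} The cases $\valpha = 0$ and $\valpha = \mathbf{1}$ are the classical eventown-type cases. For the upper bound, a family with all $i$-wise intersections even is contained in a self-orthogonal code, giving at most $2^{\lfloor n/2\rfloor}$ sets. For the all-ones case, adding one element reduces it to the all-even case, giving $2^{\lfloor (n-1)/2\rfloor}$. The constructions are the standard pairings $\{2i-1, 2i\}$, closed under unions, taking all such unions.

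\textbf{Upper bounds (main obstacle).} Given an $\valpha$-town with $m$ sets, consider the vector $(x_S)$ over $\F_2^{2^m - 1}$. The constraints say that for every $j$-set $J$ with $j \le k$,
\[
\sum_{S \supseteq J} x_S = \alpha_j .
\]
By Möbius inversion, the symmetric part of $x$ is forced. The plan is to show that the number of nonempty regions, which is at most $n$, must be at least the number of $t$-sets, up to lower order, divided by the grade.

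The approach is polynomial or rank based. Consider the $\F_2$-rank of the matrix whose rows are the characteristic vectors of $t$-wise intersections $F_{i_1} \cap \dots \cap F_{i_t}$ for $t$-subsets of $[m]$. The entries of $\valpha$ at levels $t$ through $2t$ give inner products of these rows: the product of two $t$-intersections is a $|T \cup T'|$-wise intersection. Thus the Gram matrix is $G(T, T') = \beta_{|T \cup T'|}$, a matrix in the Johnson scheme. Its $\F_2$-rank, computed via the level-$t$ component of the decomposition of $\valpha$, should be $\binom{m}{t} - O(m^{t-1})$ in grade $1$ and about $\frac{1}{2}\binom{m}{t}$ in grade $2$. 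Since the rank is at most $n$, this gives the bound $(t!/\grd)\, n \gtrsim m^t$.

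Computing this rank modulo $2$ is the hardest step. The vanishing of higher-level components is needed so that the relevant entries truncate correctly, which suggests that $k$ must be at least about $2t$; a shift argument using $\sigma$ should handle the general case. The halving in grade $2$ should come from the Gram matrix being a sum of an inclusion matrix and its complement-type analogue, of rank about half.
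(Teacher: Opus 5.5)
\textbf{The gap is in the grade-$2$ case, where you have the effect of the grade reversed in both directions.}

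\emph{Lower bound.} The theorem says grade $2$ gives a \emph{smaller} town, with $\binom{m}{t}\approx n/2$. A ``merged'' construction with $m^t/t!\approx 2n$ would violate the stated upper bound, so it cannot exist. The matching construction is the Partition--Sum combination, on disjoint halves of the ground set, of two gadgets:
\begin{itemize}
\item the stars $\{A^{(t)}_s\}$, realizing $\vgamma^{2t}$;
\item their complements $\bigl\{\tbinom{[m]}{t}\setminus A^{(t)}_s\bigr\}$, realizing $\vgamma^{2t+1}$.
\end{itemize}
Each uses $\binom{m}{t}\approx n/2$ points. These gadgets have the required parities only under a $2$-adic condition on $m$ (the paper takes $2^t\mid m$), which your sketch omits.

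\emph{Upper bound.} Your Gram matrix $G(T,T')=\alpha_{|T\cup T'|}$ involves only $\alpha_t,\dots,\alpha_{2t}$. By the proof of \Cref{lem:top-even-coordinate}, $\vgamma^{2t}_{[2t]}+\vgamma^{2t+1}_{[2t]}$ lies in $\spa_{\F_2}\{\vgamma^1_{[2t]},\dots,\vgamma^{2t-1}_{[2t]}\}$. Hence in grade $2$ the truncation $\valpha_{[2t]}$ has level $<t$, and $G$ has rank $O(m^{t-1})$, not $\frac12\binom{m}{t}$. Already for $t=1$ and $\valpha=(0,0,1)$ one gets $G=0$. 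Moreover, even a rank of $\frac12\binom{m}{t}\le n$ would only give $\binom{m}{t}\lesssim 2n$, the opposite of your claimed $(t!/2)\,n\gtrsim m^t$.

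\emph{Missing idea.} You need to bring in $\alpha_{2t+1}$ by fixing one member $F_0$ and splitting the ground set into $F_0$ and $F_0^{\sc}$. The traces $F_0\cap F_i$ and $F_0^{\sc}\cap F_i$ satisfy parity constraints given by $\sigma(\valpha)$ and $\valpha_{[k-1]}+\sigma(\valpha)$. By \Cref{lem:shift}, both vectors have level $t$ and grade $1$. The paper first adds lower-level vectors so that $\valpha'_{[3]}=(0,0,1)$, which makes the traces distinct, and then applies the grade-$1$ bound on the smaller side. In your language: the $2\binom{m}{t}$ vectors $F_0\cap\bigcap_{i\in T}F_i$ and $F_0^{\sc}\cap\bigcap_{i\in T}F_i$ have a block-diagonal Gram matrix with two grade-$1$ blocks. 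This gives $2\binom{m}{t}-O(m^{t-1})\le n$.

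\textbf{Grade $1$: a valid different route, but the key step is missing.} The paper uses \Cref{lem:replacement} (Partition--Sum plus the lower bounds) to move $\valpha$ to the form $\alpha_t=1$, $\alpha_{t+1}=\dots=\alpha_{2t}=0$, where the $t$-wise intersections form an oddtown. Your rank approach also works, but you must actually carry it out:
\begin{itemize}
\item Expand $\alpha_{2t-i}=\sum_{r=0}^{t}a_r\binom{i}{r}$ for $i=|T\cap T'|\in\{0,\dots,t\}$. Then $G=\sum_r a_rW_r^{\top}W_r$, where $W_r$ is the inclusion matrix of $r$-sets versus $t$-sets, $W_t^{\top}W_t=I$, and $\mathrm{rank}(W_r^{\top}W_r)\le\binom{m}{r}$.
\item The top coefficient $a_t$ is a $t$-th finite difference. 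It vanishes on every canonical vector of level $<t$ and equals $1$ on both $\vgamma^{2t}$ and $\vgamma^{2t+1}$, so $a_t=c_{2t}+c_{2t+1}$.
\item In grade $1$ this gives $G=I+L$ with $\mathrm{rank}\,L=O(m^{t-1})$, whence $\binom{m}{t}-O(m^{t-1})\le\mathrm{rank}\,G\le n$.
\end{itemize}
This avoids the replacement lemma entirely.

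\textbf{Smaller points.}
\begin{itemize}
\item Your worry about $k<2t$ is moot: $\lv(\valpha)=t$ forces $k\ge 2t$, and grade $2$ forces $k\ge 2t+1$, which is exactly what the splitting needs.
\item At level $0$ with $\valpha=(1,\dots,1)$, adding a common element only yields $2^{\lfloor (n+1)/2\rfloor}$. The correct bound comes from the sums $F_i+F_1$: they span a self-orthogonal space $W$ with $F_1\in W^{\perp}\setminus W$, so $\dim W\le (n-1)/2$.
\end{itemize}
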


As an illustration, the table below exhibits for every $\valpha \in \F_2^3$ its canonical decomposition, level, grade, together with the known value of $f_{\valpha}(n)$ due to Johnston and O'Neill~\cite[Table 1]{johnston_oneill}. 

\smallskip

\begin{center}
    \begin{tabular}{|c|c|c|c|c|}
        \hline
        $\valpha$ & canonical decomposition & lv & grd & $f_{\valpha}(n)$ \\
        \hline
        $(0, 0, 0)$ & $0 \cdot (\vgamma^1)_{[3]} + 0 \cdot (\vgamma^2)_{[3]} + 0 \cdot (\vgamma^3)_{[3]}$ & 0 & \slash & $2^{\lfloor n/2 \rfloor}$ \\
        \hline
        $(1, 1, 1)$ & $1 \cdot (\vgamma^1)_{[3]} + 0 \cdot (\vgamma^2)_{[3]} + 0 \cdot (\vgamma^3)_{[3]}$ & 0 & \slash & $2^{\lfloor (n-1)/2 \rfloor}$ \\
        \hline
        $(1, 0, 0)$ & $0 \cdot (\vgamma^1)_{[3]} + 1 \cdot (\vgamma^2)_{[3]} + 0 \cdot (\vgamma^3)_{[3]}$ & 1 & 1 & $n$ \\
        \hline
        $(0, 1, 1)$ & $1 \cdot (\vgamma^1)_{[3]} + 1 \cdot (\vgamma^2)_{[3]} + 0 \cdot (\vgamma^3)_{[3]}$ & 1 & 1 & $n - 1$ \\
        \hline
        $(1, 0, 1)$ & $0 \cdot (\vgamma^1)_{[3]} + 0 \cdot (\vgamma^2)_{[3]} + 1 \cdot (\vgamma^3)_{[3]}$ & 1 & 1 & $n$ or $n - 1$ \\
        \hline
        $(0, 1, 0)$ & $1 \cdot (\vgamma^1)_{[3]} + 0 \cdot (\vgamma^2)_{[3]} + 1 \cdot (\vgamma^3)_{[3]}$ & 1 & 1 & $n$ or $n - 1$ \\
        \hline
        $(0, 0, 1)$ & $0 \cdot (\vgamma^1)_{[3]} + 1 \cdot (\vgamma^2)_{[3]} + 1 \cdot (\vgamma^3)_{[3]}$ & 1 & 2 & $\lfloor n/2 \rfloor$ or $\lfloor n/2 \rfloor + 1$ \\
        \hline
        $(1, 1, 0)$ & $1 \cdot (\vgamma^1)_{[3]} + 1 \cdot (\vgamma^2)_{[3]} + 1 \cdot (\vgamma^3)_{[3]}$ & 1 & 2 & $\lfloor n/2 \rfloor$ or $\lfloor n/2 \rfloor + 1$ \\
        \hline
    \end{tabular}
\end{center}

\smallskip

Notice that the fundamental works of Berlekamp~\cite{1969Berlekamp} and Graver~\cite{1975Graver} already determine the maximum size of eventowns on $[n]$ for any $n \in \N_+$, so there is nothing to prove in the case $\lv(\valpha) = 0$. Henceforth, we shall just focus on the case $\lv(\valpha)>0$ in \Cref{thm:main}. 

\paragraph{The \texorpdfstring{$\valpha$}{alpha}-ville problem.} The $\valpha$-town condition is inherently ``asymmetric'', in the sense that it imposes parity constraints on intersection sizes but not on their complements. A natural variant is to impose parity constraints on ``agreements'' rather than ``intersections''. 

Specifically, we identify a set family $\cF \subseteq 2^{[n]}$ with a collection of $\{0, 1\}$-vectors $\cF \subseteq \{0, 1\}^n$, where each set is identified with its characteristic vector. For example, the set $\{1, 3, 6\} \in 2^{[7]}$ corresponds to the vector $(1, 0, 1, 0, 0, 1, 0) \in \{0, 1\}^7$. In this language, ``intersection'' counts the number of common $1$'s, while ``agreement'' counts the number of coordinates on which all vectors agree (i.e., the total number of common $0$'s and common $1$'s). For $F_1, \dots, F_m \in 2^{[n]}$, their agreement has cardinality
\[
\Bigl| \bigl( F_1 \cap \dots \cap F_m \bigr) \cup \bigl( [n] \setminus (F_1 \cup \dots \cup F_m) \bigr) \Bigr| \equiv n + |F_1 \cap \dots \cap F_m| + |F_1 \cup \dots \cup F_m| \pmod 2. 
\]
Hence, for any fixed $n$, prescribing the parity of agreement is equivalent to prescribing the parity of intersection-union $|F_1 \cap \dots \cap F_m| + |F_1 \cup \dots \cup F_m|$. This leads to the study of ``ville''. 

For $\valpha = (\alpha_1, \dots, \alpha_k) \in \F_2^k$, a set family $\cF$ is an $\valpha$-\emph{ville} if for $i = 1, \dots, k$, the modular equality
\[
|F_1 \cap \dots \cap F_i| + |F_1 \cup \dots \cup F_i| \equiv \alpha_i \pmod 2
\]
holds for any distinct $F_1, \dots, F_i \in \cF$. Denote by $g_{\valpha}(n)$ the maximum size of an $\valpha$-ville $\cF \subseteq 2^{[n]}$. 


Does an analogue of ``the highest-level principle'' (see Theorem~\ref{thm:main}) hold for $\valpha$-ville? We do not know how to establish such a statement. However, in contrast to the polynomial growth of $f_{\valpha}(n)$ in $n$ shown by \Cref{thm:main}, we can show that $g_{\valpha}(n)$ is ``often'' bounded. To be precise, we introduce the extended set $\wF_2 \eqdef \F_2 \cup \{*\}$, where $|F_1 \cap \dots \cap F_i| + |F_1 \cup \dots \cup F_i| \equiv * \pmod 2$ indicates that no restriction is imposed on $i$-wise agreements. Note that no addition operation is defined on $\wF_2^k$. 

\begin{theorem} \label{thm:ville}
	Suppose $\ell, k$ are integers with $2 \le \ell \le k$ and consider the vector
    \[
    \valpha_{\ell}^k \eqdef \bigl( \underbrace{*, \dots, *}_{\ell-1}, 1, \underbrace{*, \dots, *}_{k-\ell} \bigr) \in \wF_2^k. 
    \]
    Then the sequence $\bigl( g_{\valpha_{\ell}^k}(n) \bigr)_{n \in \N_+}$ is bounded if and only if $\ell$ is a power of $2$. 
\end{theorem}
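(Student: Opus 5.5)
The key observation is a parity identity that rewrites the $\ell$-wise ``intersection-union'' count as a combination of lower-order intersection parities.

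For a point $x\in[n]$ and sets $F_1,\dots,F_\ell$, put $x_i=\mathbf 1[x\in F_i]$. Then $x$ contributes $\prod_i x_i + \bigl(1+\prod_i(1+x_i)\bigr)$ modulo $2$ to $|F_1\cap\dots\cap F_\ell|+|F_1\cup\dots\cup F_\ell|$. Expanding $\prod_i(1+x_i)=\sum_{S\subseteq[\ell]}\prod_{i\in S}x_i$, the terms $S=\emptyset$ and $S=[\ell]$ cancel against $1$ and $\prod_i x_i$. Summing over $x$ gives
\[
|F_1\cap\dots\cap F_\ell|+|F_1\cup\dots\cup F_\ell| \equiv \sum_{\emptyset\ne S\subsetneq[\ell]} \Bigl|\bigcap_{i\in S}F_i\Bigr| \pmod 2 .
\]
Both directions of \Cref{thm:ville} will be read off from this identity. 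Note that an $\valpha_\ell^k$-ville is precisely a family in which every $\ell$ distinct members satisfy this with right-hand side $\equiv 1$; the $*$ entries impose nothing.

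\emph{Boundedness when $\ell=2^s$.} Suppose $\cF$ is an $\valpha_\ell^k$-ville with $|\cF|=m$. Colour each $j$-subset of $\cF$, for $1\le j\le \ell-1$, by the parity of its $j$-wise intersection. Apply the hypergraph Ramsey theorem iteratively, first for $j=\ell-1$, then $\ell-2$, and so on, with two colours at each step. This shows that if $m$ exceeds a constant $R(\ell)$, depending only on $\ell$, there is a subfamily $\cF'\subseteq\cF$ with $|\cF'|\ge\ell$ on which, for every $j<\ell$, all $j$-wise intersections have a common parity $a_j$. For any $\ell$ distinct members of $\cF'$, the identity gives
\[
1\equiv\sum_{j=1}^{\ell-1}\binom{\ell}{j}a_j \pmod 2 .
\]
When $\ell$ is a power of $2$, Lucas' theorem makes every $\binom{\ell}{j}$ with $0<j<\ell$ even, so the right-hand side is $0$, a contradiction. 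Hence $g_{\valpha_\ell^k}(n)\le R(\ell)$ for all $n$. The only delicate point is the bookkeeping of the iterated Ramsey argument, which is standard.

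\emph{Unboundedness when $\ell$ is not a power of $2$.} Then $\ell\ge3$, and by Lucas' theorem there is some $j$ with $0<j<\ell$ and $\binom{\ell}{j}$ odd. Let $\valpha\in\F_2^{\ell-1}$ have a single $1$, in coordinate $j$. Since $\ell-1\ge2$, $\valpha$ is neither all-$0$ nor all-$1$, so $\lv(\valpha)>0$ (the level-$0$ vectors are exactly the constant ones). By \Cref{thm:main}, $f_{\valpha}(n)=\Omega(n^{1/t})\to\infty$. Take an $\valpha$-town $\cF\subseteq 2^{[n]}$ of that size. For any $\ell$ distinct members, the identity gives intersection-union parity $\sum_{j'<\ell}\binom{\ell}{j'}\alpha_{j'}=\binom{\ell}{j}\equiv1$. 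Thus $\cF$ is an $\valpha_\ell^k$-ville, and $g_{\valpha_\ell^k}(n)\to\infty$.

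The main obstacle is noticing the reduction of the ville condition to lower-order intersection parities with binomial weights. Once that identity is in hand, both directions are short. The only external inputs are the Ramsey theorem and \Cref{thm:main}, and for the latter only the fact that $f_{\valpha}(n)$ is unbounded whenever $\valpha$ is nonconstant.
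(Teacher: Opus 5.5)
Your proof is correct. The boundedness half is essentially the paper's argument. Both use Ramsey to pass to a subfamily where $j$-wise intersection parities depend only on $j$, and then the $\ell$-wise intersection-union parity is $\sum_{0<j<\ell}\binom{\ell}{j}a_j\equiv 0$ for $\ell=2^s$. The differences are technical. The paper colours $\ell$-subsets by the ordered profile of all $2^\ell-1$ intersection parities, which forces its positional symmetrization (the $10\ell^2$ trick) and a Venn-cell count. Your iterated two-colour Ramsey over each uniformity $j<\ell$ gives symmetry directly, and your identity $|F_1\cap\dots\cap F_\ell|+|F_1\cup\dots\cup F_\ell|\equiv\sum_{\emptyset\ne S\subsetneq[\ell]}\bigl|\bigcap_{i\in S}F_i\bigr|$ replaces the Venn cells. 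The unboundedness half takes a genuinely different route. The paper exhibits the star family $\cF_{2t}(m)$ with $\binom{-\ell}{t}$ odd. It checks directly that $\ell$-wise intersections are empty while unions have odd size $\binom{m}{t}-\binom{m-\ell}{t}\equiv\binom{-\ell}{t}$, which is explicit and needs only the parity facts behind \Cref{prop:realization}. You instead use your identity to show that every $\valpha$-town with $\valpha\in\F_2^{\ell-1}$ and $\sum_j\binom{\ell}{j}\alpha_j\equiv 1$ is automatically an $\valpha_\ell^k$-ville, and then invoke \Cref{thm:main_lower} for $\valpha=e_j$. Your route leans on the full lower-bound machinery (canonical decomposition and Partition--Sum). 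In return it gives the clean reduction $g_{\valpha_\ell^k}(n)\ge f_{\valpha}(n)$ for every such $\valpha$, with one identity driving both directions. The paper's construction is then the special case $\valpha=\vgamma^{2t}_{[\ell-1]}$, since $\sum_j\binom{\ell}{j}\binom{t-1}{j-1}=\binom{\ell+t-1}{t}\equiv\binom{-\ell}{t}$ by Vandermonde.
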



\paragraph{Paper organization.} The proof of \Cref{thm:main} is divided among \Cref{sec:la_pre,sec:lower,sec:upper}. In \Cref{sec:la_pre}, we establish \Cref{lem:canonical}, the cornerstone of canonical decompositions, together with several auxiliary results. The lower and upper bounds are then proved in \Cref{sec:lower} and \Cref{sec:upper}, respectively. In \Cref{sec:proof}, we prove \Cref{thm:ville}. We conclude with some remarks and open problems in \Cref{sec:remark}. 

\section{Preliminaries} \label{sec:la_pre}

In this section, we prove \Cref{lem:canonical} and derive several other useful properties of canonical vectors. 


Henceforth we use the abbreviated notation $a \= b$, which refers to $a = b \bmod 2 \in \F_2$. 

\begin{proof}[Proof of \Cref{lem:canonical}]
    Let $\mGamma$ be the infinite matrix whose rows are $\vgamma^1, \vgamma^2, \dots$. We need to show that the $k \times k$ top-left principal submatrix $\mGamma_k$ of $\mGamma$ is $\F_2$-invertible. Permuting the rows of $\mGamma_k$, we obtain $\widetilde{\mGamma}_k$ with $(\widetilde{\mGamma}_k)_{i, j} = \binom{i - \lceil k/2 \rceil - 1}{j - 1} \bmod 2$. This process is illustrated for $k = 6$ below. 
    
    {\footnotesize
    \[
    \mGamma_6 \= \begin{pmatrix}
        \binom{-1}{0} & \binom{-1}{1} & \binom{-1}{2} & \binom{-1}{3} & \binom{-1}{4} & \binom{-1}{5} \\
        \binom{0}{0} & \binom{0}{1} & \binom{0}{2} & \binom{0}{3} & \binom{0}{4} & \binom{0}{5} \\
        \binom{-2}{0} & \binom{-2}{1} & \binom{-2}{2} & \binom{-2}{3} & \binom{-2}{4} & \binom{-2}{5} \\
        \binom{1}{0} & \binom{1}{1} & \binom{1}{2} & \binom{1}{3} & \binom{1}{4} & \binom{1}{5} \\
        \binom{-3}{0} & \binom{-3}{1} & \binom{-3}{2} & \binom{-3}{3} & \binom{-3}{4} & \binom{-3}{5} \\
        \binom{2}{0} & \binom{2}{1} & \binom{2}{2} & \binom{2}{3} & \binom{2}{4} & \binom{2}{5}
    \end{pmatrix}
    \quad \rightsquigarrow \quad
    \widetilde{\mGamma}_6 \= \begin{pmatrix}
        \binom{-3}{0} & \binom{-3}{1} & \binom{-3}{2} & \binom{-3}{3} & \binom{-3}{4} & \binom{-3}{5} \\
        \binom{-2}{0} & \binom{-2}{1} & \binom{-2}{2} & \binom{-2}{3} & \binom{-2}{4} & \binom{-2}{5} \\
        \binom{-1}{0} & \binom{-1}{1} & \binom{-1}{2} & \binom{-1}{3} & \binom{-1}{4} & \binom{-1}{5} \\
        \binom{0}{0} & \binom{0}{1} & \binom{0}{2} & \binom{0}{3} & \binom{0}{4} & \binom{0}{5} \\
        \binom{1}{0} & \binom{1}{1} & \binom{1}{2} & \binom{1}{3} & \binom{1}{4} & \binom{1}{5} \\
        \binom{2}{0} & \binom{2}{1} & \binom{2}{2} & \binom{2}{3} & \binom{2}{4} & \binom{2}{5}
    \end{pmatrix}
    \]
    }
    
    \noindent So, it suffices to show for all $a \in \Z$ that the matrix $\mM_a \eqdef \Bigl( \binom{i+a}{j-1} \bmod 2 \Bigr)_{i, j \in [k]}$ has full rank over $\F_2$. 
    
	We proceed by induction on $a$. Since $\mM_{-1}$ is lower triangular with diagonal entries $\binom{i-1}{i-1} = 1$, the statement holds for $a = -1$. Suppose that $\mM_a$ is invertible. Fix $j$ and regard a typical entry $\binom{x}{j-1} = \frac{x(x-1)\cdots(x-j+2)}{(j-1)!}$ in the row indexed by $x$ as a polynomial in $x$ of degree at most $k-1$. For instance, the row $\Bigl( \binom{i+a}{j-1} \bmod 2 \Bigr)_{j \in [k]}$ is indexed by $x = i + a$. Applying the Lagrange interpolation formula at $a + 1, \, a + 2, \, \dots, \, a + k$ and evaluating at $x = a$, we deduce that
 	\[
    \binom{a}{j-1} = \sum_{s=1}^{k} \Biggl( \prod_{\substack{1 \le t \le k\\t \ne s}} \frac{0-t}{s-t} \Biggr) \cdot \binom{a+s}{j-1} = \sum_{s=1}^{k} (-1)^{s-1} \binom{k}{s} \cdot \binom{a+s}{j-1}. 
    \]
 	Reducing modulo $2$, the coefficient of $\binom{a+k}{j-1}$ equals $1$ in $\F_2$, and so $\Bigl( \binom{a}{0}, \dots, \binom{a}{k-1} \Bigr)$, the row indexed by $x = a$, is a linear combination in $\F_2$ of the rows indexed by $x = a + 1, \, x = a + 2, \, \dots, \, x = a + k$ whose coefficient before the row indexed by $x = a + k$ is $1$. Consequently, replacing the row indexed by $x = a + k$ in matrix $\mM_a$ by the row indexed by $x = a$ is an invertible elementary row operation in $\F_2$. Phrased differently, there exists $\varphi \in \operatorname{SL}_k(\F_2)$ such that $\mM_{a-1} = \varphi \cdot \mM_a$. It follows that the matrix $\mM_{a-1}$ is invertible. To summarize, the above deductions show that
    \vspace{-0.5em}
    \begin{itemize}
        \item if $a \in \Z$ and $\mM_a \in \operatorname{SL}_k(\F_2)$ (i.e., $\mM_a$ is invertible), then $\mM_{a-1} \in \operatorname{SL}_k(\F_2)$. 
    \end{itemize}
    \vspace{-0.5em}

    The same argument as above also shows that
    \vspace{-0.5em}
    \begin{itemize}
        \item if $a \in \Z$ and $\mM_a \in \operatorname{SL}_k(\F_2)$ (i.e., $\mM_a$ is invertible), then $\mM_{a+1} \in \operatorname{SL}_k(\F_2)$. 
    \end{itemize}
    \vspace{-0.5em}
    (Note that we only need $\mM_a$ to be invertible when $a$ is negative for the purpose of proving \Cref{lem:canonical}, and so this is unnecessary.) The inductive proof is complete, and hence $\mGamma_k$ is invertible. 
	
	We thus conclude that $(\vgamma^1)_{[k]}, \dots, (\vgamma^k)_{[k]}$ are linearly independent over $\F_2$. 
\end{proof}

\begin{corollary} \label{lem:top-even-coordinate}
    If $t \in \N_+$ and $x \in \{2t, 2t+1\}$, then vectors $(\vgamma^1)_{[2t]}, \dots, (\vgamma^{2t-1})_{[2t]}, (\vgamma^x)_{[2t]}$ span $\F_2^{2t}$.
\end{corollary}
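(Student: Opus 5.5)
The plan is to reduce both cases to the invertibility of the matrices $\mM_a$ established inside the proof of \Cref{lem:canonical}, and to show that in each case the chosen $2t$ vectors are exactly the rows of some $\mM_a$ with $k = 2t$, listed in a different order.

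\textbf{Case $x = 2t$.} Here the statement is just \Cref{lem:canonical} with $k = 2t$: the vectors $(\vgamma^1)_{[2t]}, \dots, (\vgamma^{2t})_{[2t]}$ form a basis of $\F_2^{2t}$.

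\textbf{Case $x = 2t+1$.} I will index rows by the upper argument of the binomial coefficients, as in the proof of \Cref{lem:canonical}. For $j \in [2t]$, write
\[
\bigl(\vgamma^{2i-1}\bigr)_j \= \binom{-i}{j-1}, \qquad \bigl(\vgamma^{2i}\bigr)_j \= \binom{i-1}{j-1}.
\]
So $(\vgamma^{2i-1})_{[2t]}$ is the row indexed by $-i$, and $(\vgamma^{2i})_{[2t]}$ is the row indexed by $i-1$.

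\begin{itemize}
\item The vectors $\vgamma^1, \vgamma^3, \dots, \vgamma^{2t-1}$ give the rows indexed by $-1, -2, \dots, -t$.
\item The vectors $\vgamma^2, \vgamma^4, \dots, \vgamma^{2t-2}$ give the rows indexed by $0, 1, \dots, t-2$.
\item The added vector $\vgamma^{2t+1}$ (that is, $i = t+1$) gives the row indexed by $-t-1$.
\end{itemize}

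Altogether, the index set is exactly the $2t$ consecutive integers $-t-1, -t, \dots, t-2$. Up to a permutation of rows, the matrix formed by these vectors is therefore $\mM_a = \bigl(\binom{i+a}{j-1} \bmod 2\bigr)_{i,j \in [2t]}$ with $a = -t-2$.

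The proof of \Cref{lem:canonical} shows that $\mM_{-1}$ is invertible and that invertibility passes from $\mM_a$ to $\mM_{a-1}$, for every $a \in \Z$. Starting from $a = -1$ and stepping down, $\mM_{-t-2}$ is invertible over $\F_2$. Hence these $2t$ vectors span $\F_2^{2t}$.

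The only point that needs care is the bookkeeping. I must confirm that dropping $\vgamma^{2t}$ (the row indexed by $t-1$) and adding $\vgamma^{2t+1}$ (the row indexed by $-t-1$) still leaves a block of consecutive integers. The lowering induction in the proof of \Cref{lem:canonical} requires exactly this consecutiveness.
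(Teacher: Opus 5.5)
Your proof is correct and is essentially the paper's argument. The paper performs the single interpolation-based exchange that replaces the row indexed by $t-1$, namely $(\vgamma^{2t})_{[2t]}$, with the row indexed by $-(t+1)$, namely $(\vgamma^{2t+1})_{[2t]}$; this is exactly the step $\mM_{-t-1} \rightsquigarrow \mM_{-t-2}$ from the proof of \Cref{lem:canonical}. You instead note that the resulting index set $\{-t-1, \dots, t-2\}$ is consecutive and cite the already-established invertibility of $\mM_{-t-2}$, and your index bookkeeping, including the case $t=1$, checks out.
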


\begin{proof}
    The case $x = 2t$ is exactly \Cref{lem:canonical}. For the case $x = 2t + 1$, by definition, $(\vgamma^{2t})_j \= \binom{t-1}{j-1}$ and $(\vgamma^{2t+1})_j \= \binom{-(t+1)}{j-1}$. We adopt the notations associated with $\mGamma_{2t}$ in the proof of \Cref{lem:canonical}. The row $(\vgamma^{2t+1})_{[2t]}$ indexed by $x = -(t+1)$ is obtained from the rows $x = t - 1, \, x = t - 2, \, \dots, \, x = -t$ via the same interpolation-based row-replacement, whose leading coefficient is $1$ in $\F_2$. Hence, the linear expansion of $(\vgamma^{2t+1})_{[2t]}$ in the basis $(\vgamma^1)_{[2t]}, \dots, (\vgamma^{2t})_{[2t]}$ contains $(\vgamma^{2t})_{[2t]}$ with coefficient $1$. It follows that the vectors $(\vgamma^1)_{[2t]}, \dots, (\vgamma^{2t-1})_{[2t]}, (\vgamma^{2t+1})_{[2t]}$ span $\F_2^{2t}$ as well.
\end{proof}

For any $\vgamma = (\gamma_1, \gamma_2, \dots) \in \F_2^{\otimes \omega}$, recall that the shifted vector $\sigma(\vgamma) \eqdef (\gamma_2, \gamma_3, \dots)$. The following lemma will be useful in the upper bound proofs of \Cref{thm:main}. 

\begin{lemma} \label{lem:shift}
	For every positive integer $i$, we have the identities of canonical vectors
	\[
	\sigma(\vgamma^{2i-1}) = \vgamma^1 + \vgamma^3 + \dots + \vgamma^{2i-1},	\qquad \sigma(\vgamma^{2i}) = \vgamma^2 + \vgamma^4 + \dots + \vgamma^{2i-2}. 
	\]
    In particular, the shifted vector $\sigma(\vgamma^k)$ lies in the $\F_2$-span of canonical vectors $\vgamma^1, \dots, \vgamma^k$ for $k \ge 1$. 
\end{lemma}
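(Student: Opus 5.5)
The plan is to prove both identities coordinatewise using Pascal's rule $\binom{x}{m} = \binom{x-1}{m} + \binom{x-1}{m-1}$, which holds for all $x \in \Z$ with the extended binomial coefficients. Iterating it in the upper index gives the telescoped form
\[
\binom{x}{m} - \binom{x-r}{m} = \sum_{s=1}^{r} \binom{x-s}{m-1},
\]
valid for every integer $x$ and every $r \ge 0$. The $j$-th coordinate of $\sigma(\vgamma)$ is $\vgamma_{j+1}$, so for each canonical vector we must express $\binom{\,\cdot\,}{j}$ as a sum of terms $\binom{\,\cdot\,}{j-1}$. This is exactly what the telescoped Pascal rule provides.

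\emph{Even case.} Here $(\sigma(\vgamma^{2i}))_j \= \binom{i-1}{j}$. Take $x = i-1$ and $r = i-1$ in the telescoped identity. Since $\binom{0}{j} = 0$ for $j \ge 1$, we get
\[
\binom{i-1}{j} = \sum_{s=1}^{i-1}\binom{i-1-s}{j-1} = \sum_{u=0}^{i-2}\binom{u}{j-1}.
\]
Each term $\binom{u}{j-1}$ is, modulo $2$, the $j$-th coordinate of $\vgamma^{2(u+1)}$. Reindexing gives $\sigma(\vgamma^{2i}) = \vgamma^2 + \vgamma^4 + \dots + \vgamma^{2i-2}$, with an empty sum when $i = 1$. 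This is consistent with $\sigma(\vgamma^2) = 0$.

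\emph{Odd case.} Here $(\sigma(\vgamma^{2i-1}))_j \= \binom{-i}{j}$. Take $x = -i$ and $r = -i$ in the reversed direction: that is, write $\binom{0}{j} - \binom{-i}{j} = \sum_{s=1}^{i} \binom{-s}{j-1}$, which follows by summing Pascal's rule $\binom{-s+1}{j} - \binom{-s}{j} = \binom{-s}{j-1}$ over $s = 1, \dots, i$. For $j \ge 1$ we have $\binom{0}{j} = 0$, so $\binom{-i}{j} = -\sum_{s=1}^{i}\binom{-s}{j-1}$. Modulo $2$ the sign disappears, and $\binom{-s}{j-1}$ is the $j$-th coordinate of $\vgamma^{2s-1}$. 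Hence $\sigma(\vgamma^{2i-1}) = \vgamma^1 + \vgamma^3 + \dots + \vgamma^{2i-1}$. For $i = 1$ this reads $\sigma(\vgamma^1) = \vgamma^1$, which is correct for the all-$1$ vector.

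\emph{Consequence.} The ``in particular'' statement is immediate. For odd $k = 2i-1$, the right-hand side uses indices up to $k$; for even $k = 2i$, it uses indices up to $k-2$. In both cases $\sigma(\vgamma^k)$ lies in the span of $\vgamma^1, \dots, \vgamma^k$.

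The only delicate point is applying Pascal's rule with negative upper index. This rule is a polynomial identity in $x$, so it holds for all integers, and the boundary terms $\binom{0}{j} = 0$ for $j \ge 1$ are correct. Beyond that, the argument is bookkeeping of the index shift between coordinate $j+1$ of $\vgamma$ and the lower index $j$ of the binomial coefficient.
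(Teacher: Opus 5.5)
Your proof is correct and follows essentially the paper's route: both verify the identities coordinatewise using the hockey-stick (telescoped Pascal) summation in the upper index, with the boundary term $\binom{0}{j}=0$ for $j\ge 1$. The only cosmetic difference is in the odd case: you telescope Pascal's rule directly with negative upper indices, while the paper first rewrites $\binom{-i}{j}$ modulo $2$ as $\binom{i+j-1}{j}$ and then applies the usual positive hockey-stick identity.
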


\begin{proof}
	Notice that $\sigma(\vgamma^2) = \bm{0}$ is consistent with the identity. Due to standard binomial identities, 
    \begin{align*}
        \sigma(\vgamma^{2i-1})_j = (\vgamma^{2i-1})_{j+1} \= \binom{i+j-1}{j} &= \sum_{\ell=1}^{i} \binom{\ell+j-2}{j-1} \= \sum_{\ell=1}^i (\vgamma^{2\ell-1})_j, \\
        \sigma(\vgamma^{2i})_j = (\vgamma^{2i})_{j+1} = \binom{i-1}{j} &= \sum_{\ell=1}^{i-1} \binom{\ell-1}{j-1} = \sum_{\ell=1}^{i-1} (\vgamma^{2\ell})_j. \qedhere
    \end{align*}
\end{proof}


In addition to the algebraic properties of the canonical vectors $\vgamma^1, \vgamma^2, \dots$, we will also need a combinatorial tool, known as the \emph{Partition--Sum Lemma}~\cite[Lemma~3]{johnston_oneill}, to establish \Cref{thm:main}. For the sake of self-containment, we also include the proof of Partition--Sum Lemma below. 

\begin{lemma}[Partition--Sum Lemma] \label{lem:Partition--Sum}
	Let $\valpha, \vbeta \in \F_2^k$ and $x, y \in \N_+$. If $\min \bigl\{ f_{\valpha}(x), f_{\vbeta}(y) \bigr\} > k$, then
	\[
    f_{\valpha+\vbeta}(x+y) \ge \min \bigl\{ f_{\valpha}(x), f_{\vbeta}(y) \bigr\}. 
    \]
\end{lemma}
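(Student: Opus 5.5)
The plan is to build the required family by pairing sets from optimal families on disjoint ground sets. Let $m \eqdef \min\{f_{\valpha}(x), f_{\vbeta}(y)\}$. Take an $\valpha$-town $\cA = \{A_1, \dots, A_m\} \subseteq 2^{[x]}$ and a $\vbeta$-town $\cB = \{B_1, \dots, B_m\}$ on a copy of $[y]$ disjoint from $[x]$; we may pass to subfamilies of size $m$, since a subfamily of an $\valpha$-town is again an $\valpha$-town. Identify $[x] \sqcup [y]$ with $[x+y]$ and set $F_s \eqdef A_s \sqcup B_s$ for $s \in [m]$. Because the ground sets are disjoint, for any distinct indices $s_1, \dots, s_i$ we have
\[
|F_{s_1} \cap \dots \cap F_{s_i}| = |A_{s_1} \cap \dots \cap A_{s_i}| + |B_{s_1} \cap \dots \cap B_{s_i}| \= \alpha_i + \beta_i
\]
for every $1 \le i \le k$. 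So $\{F_s\}$ satisfies all the $(\valpha+\vbeta)$-town parity conditions.

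The one point to check is that the $F_s$ are pairwise distinct, so that the family genuinely has $m$ members and the conditions are imposed on distinct sets. If $F_s = F_{s'}$ with $s \ne s'$, then $A_s = A_{s'}$, contradicting the distinctness of the sets in $\cA$. Hence $|\{F_s\}| = m$, which gives $f_{\valpha+\vbeta}(x+y) \ge m$.

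I expect the role of the hypothesis $m > k$ to be the only subtle part. It guarantees that $i$ distinct members exist for every $i \le k$, so none of the constraints is vacuous in a degenerate way. In the argument above it does not appear essential, so I would simply note that the construction works and that the hypothesis matches how the lemma is applied later. If the paper's convention requires the families to be nontrivially constrained at every level, then $m > k$ is exactly what ensures this. No real obstacle is expected.
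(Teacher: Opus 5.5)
Your proposal is correct and is essentially the paper's proof: take an $\valpha$-town and a $\vbeta$-town on disjoint ground sets, form $A_s \cup B_s$, and use that intersection sizes add across the two disjoint parts. The paper's proof likewise never uses the hypothesis $\min\{f_{\valpha}(x), f_{\vbeta}(y)\} > k$. Your justification of distinctness via $A_s \ne A_{s'}$ is slightly more precise than the paper's wording.
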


\begin{proof}
	Suppose vectors $\valpha = (\alpha_1, \dots, \alpha_k), \, \vbeta = (\beta_1, \dots, \beta_k)$ and define $\vgamma \eqdef \valpha + \vbeta$. Write $a \eqdef f_{\valpha}(x)$ and $b \eqdef f_{\vbeta}(y)$. According to the definitions, there exist
    \vspace{-0.5em}
    \begin{itemize}
        \item an $\valpha$-town $\cA = \{A_1, \dots, A_a\}$ on ground set $[x] = \{1, 2, \dots, x\}$ of size $a$, and
        \vspace{-0.5em}
        \item a $\vbeta$-town $\cB = \{B_1, \dots, B_b\}$ on ground set $\{x+1, x+2, \dots, x+y\}$ of size $b$. 
    \end{itemize}
    \vspace{-0.5em}
    We claim that the family $\cF \eqdef \bigl\{ A_i \cup B_i : i = 1, 2, \dots, \min\{a, b\} \bigr\}$ is a $\vgamma$-town on ground set $[x+y]$ of size $\min\{a, b\}$, implying that $f_{\vgamma}(x+y) \ge \min \bigl\{ f_{\valpha}(x), f_{\vbeta}(y) \bigr\}$. Since the ground sets of $\cA$ and $\cB$ are disjoint, $A_i \cup B_i$ are pairwise distinct, and so $|\cF| = \min\{a, b\}$. For any $\ell \in [k]$ and any $I \in \binom{\min\{a, b\}}{\ell}$, 
    \[
    \biggl| \bigcap_{i \in I} (A_i \cup B_i) \biggr| = \biggl| \bigcap_{i \in I} A_i \biggr| + \biggl| \bigcap_{i \in I} B_i \biggr| \= \valpha_{\ell} + \vbeta_{\ell} \= \vgamma_{\ell}, 
    \]
    meaning that $\cF$ is a $\vgamma$-town. So, the claim is valid, and the proof is complete. 
\end{proof}

This Partition--Sum Lemma has the following direct corollary. 

\begin{corollary} \label{coro:Partition--Sum}
    Let $\valpha_1, \dots, \valpha_{\ell} \in \F_2^k$ and $x_1, \dots, x_{\ell} \in \N_+$. If $\min \bigl\{ f_{\valpha_1}(x_1), \dots, f_{\valpha_{\ell}}(x_{\ell}) \bigr\} > k$, then
    \[
    f_{\valpha_1+\dots+\valpha_{\ell}}(x_1 + \dots + x_{\ell}) \ge \min \bigl\{ f_{\valpha_1}(x_1), \dots, f_{\valpha_{\ell}}(x_{\ell}) \bigr\}. 
    \]
\end{corollary}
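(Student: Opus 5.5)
The proof is an induction on $\ell$, with \Cref{lem:Partition--Sum} as the inductive step. For $\ell = 1$ the statement is trivial, since both sides equal $f_{\valpha_1}(x_1)$.

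For $\ell \ge 2$, set $\vbeta \eqdef \valpha_1 + \dots + \valpha_{\ell-1}$ and $y \eqdef x_1 + \dots + x_{\ell-1}$. The hypothesis $\min\{f_{\valpha_1}(x_1), \dots, f_{\valpha_\ell}(x_\ell)\} > k$ implies in particular $\min\{f_{\valpha_1}(x_1), \dots, f_{\valpha_{\ell-1}}(x_{\ell-1})\} > k$. The induction hypothesis therefore applies and gives
\[
f_{\vbeta}(y) \ge \min \bigl\{ f_{\valpha_1}(x_1), \dots, f_{\valpha_{\ell-1}}(x_{\ell-1}) \bigr\} > k.
\]
We also have $f_{\valpha_\ell}(x_\ell) > k$ directly from the hypothesis, so $\min\{f_{\vbeta}(y), f_{\valpha_\ell}(x_\ell)\} > k$, which is exactly what \Cref{lem:Partition--Sum} requires. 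Applying it to $\vbeta, \valpha_\ell$ on ground sizes $y, x_\ell$ yields
\[
f_{\vbeta + \valpha_\ell}(y + x_\ell) \ge \min \bigl\{ f_{\vbeta}(y), f_{\valpha_\ell}(x_\ell) \bigr\} \ge \min \bigl\{ f_{\valpha_1}(x_1), \dots, f_{\valpha_\ell}(x_\ell) \bigr\}.
\]
Since $\vbeta + \valpha_\ell = \valpha_1 + \dots + \valpha_\ell$ and $y + x_\ell = x_1 + \dots + x_\ell$, this is the claim.

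The only point needing care is keeping the threshold $> k$ through the induction, and that is handled by the inductive bound $f_{\vbeta}(y) \ge \min\{\cdots\} > k$ above.

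Alternatively, one can avoid induction and mimic the proof of \Cref{lem:Partition--Sum} directly. Take $\valpha_s$-towns on pairwise disjoint ground sets of sizes $x_s$, and form the sets $\bigcup_s A^{(s)}_i$ for $i$ up to the minimum of the town sizes. Intersection sizes then add across the blocks, so each $i$-wise intersection has parity $\sum_s (\valpha_s)_i$, and the sets are distinct because each block's sets are distinct.
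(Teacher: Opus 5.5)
Your induction on $\ell$ is correct and is exactly what the paper has in mind when it calls this a direct corollary of \Cref{lem:Partition--Sum}; the paper gives no separate proof. You iterate the lemma and carry the threshold through the inductive bound $f_{\vbeta}(y) \ge \min\{f_{\valpha_1}(x_1), \dots, f_{\valpha_{\ell-1}}(x_{\ell-1})\} > k$, and your alternative one-shot construction on $\ell$ disjoint blocks is likewise valid, being the lemma's proof run with $\ell$ blocks instead of two.
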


It is worth mentioning that the Partition--Sum Lemma is useful in two different ways. 
\vspace{-0.5em}
\begin{itemize}
    \item On one hand, given constructions of large $\valpha_i$-towns for $i = 1, \dots, \ell$, the lemma yields a large $(\valpha_1 + \dots + \valpha_{\ell})$-town, which will be useful for establishing the lower bounds in \Cref{thm:main}. 
    \vspace{-0.5em}
    \item On the other hand, the lemma acts as a strong triangle inequality, allowing us to combine vectors $\valpha$ and $\vbeta$ from different levels while preserving a smaller upper bound on $f_{\valpha}(n)$. This turns out to be a crucial step in our reductive proof of the upper bounds in \Cref{thm:main}. 
\end{itemize}
\vspace{-0.5em}
These features make the Partition--Sum Lemma our main technical tool in the proof of \Cref{thm:main}. 

\section{The lower bounds in \texorpdfstring{\Cref{thm:main}}{Theorem 3}} \label{sec:lower}

In this section, we deduce the lower bounds restated below in \Cref{thm:main}. 

\begin{theorem} \label{thm:main_lower}
    Suppose $k \in \N_+$ and let $\valpha \in \F_2^k$ with $\lv(\valpha) = t > 0$. Then, as $n \to \infty$, we have
    \[
    f_{\valpha}(n) \ge \begin{cases}
        \bigl( 1 + o(1) \bigr) \cdot (t!)^{1/t} \cdot n^{1/t} \qquad &\text{if $\grd(\valpha) = 1$}, \\
        \bigl( 1 + o(1) \bigr) \cdot (t!/2)^{1/t} \cdot n^{1/t} \qquad &\text{if $\grd(\valpha) = 2$}. 
    \end{cases}
    \]
\end{theorem}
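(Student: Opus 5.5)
The plan is to build the town by the Partition--Sum Lemma (\Cref{coro:Partition--Sum}), splitting the canonical decomposition \eqref{eq:canonical} of $\valpha$ into a \emph{top part} at level $t$ and a \emph{lower part} of level $<t$. The top part will use almost all of the ground set. The lower part will be realized on a ground set of size $o(n)$.

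Write $\valpha = \vbeta + \valpha^{\mathrm{top}}$, where $\vbeta$ collects the canonical vectors of level $<t$ and $\valpha^{\mathrm{top}}$ is the level-$t$ part of the decomposition, i.e.\ the sum of those of $\vgamma^{2t},\vgamma^{2t+1}$ that appear in \eqref{eq:canonical}.

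\textbf{Step 1: the lower part.} I would show by induction on the level that every $\vbeta$ with $\lv(\vbeta)=s<t$ admits, for every $m$, a $\vbeta$-town of size at least $m$ on $O(m^{s})$ points. Level $0$ costs $O(\log m)$ points via eventowns. The constructions of Step 2, applied at level $s$, give the general case. With $m\approx (t!\,n)^{1/t}$ this uses $O(n^{s/t})=o(n)$ points. \Cref{coro:Partition--Sum} then reduces the theorem to realizing $\valpha^{\mathrm{top}}$ alone. The reduction costs a lower-order change in $n$ and a lower-order perturbation of $\valpha^{\mathrm{top}}$, which can be absorbed into $\vbeta$.

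\textbf{Step 2: the top part, and the main obstacle.} I would take $m$ sets indexed by $[m]$ and a ground set consisting of the $t$-subsets $T\subseteq[m]$, each possibly with a multiplicity $w$. To each $T$ I add bounded-size padding points indexed by smaller subsets $S\subseteq[m]$, $|S|<t$, with multiplicities $w(|S|)\in\{0,1\}$. Set $F_i=\{S: i\in S\}$. Then for distinct $i_1,\dots,i_j$,
\[
|F_{i_1}\cap\dots\cap F_{i_j}| = \sum_{s\le t} w(s)\binom{m-j}{s-j},
\]
which is independent of the chosen indices. The padding uses only $O(m^{t-1})=o(n)$ points.

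Restricting $m$ to a fixed residue class modulo a large power $2^L$ (with $2^L>k$), Lucas' theorem makes the vector $u^{(s)}_m\eqdef\bigl(\binom{m-j}{s-j}\bmod 2\bigr)_{j\in[k]}$ depend only on that residue. The key claim is:
\begin{itemize}
    \item suitable residues make $u^{(t)}_m$ have level $t$ and contain exactly one of $\vgamma^{2t},\vgamma^{2t+1}$;
    \item the vectors $u^{(s)}_m$ for $s<t$ span all level-$<t$ corrections.
\end{itemize}
This is the step I expect to be hardest: computing canonical decompositions of these binomial vectors. I would attack it with the shift identities of \Cref{lem:shift}. Passing from $j$ to $j+1$ corresponds to $\sigma$, and Pascal's rule relates $u^{(s)}_{m}$ to $\sigma(u^{(s)}_{m})$ and $u^{(s-1)}_{m}$. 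Combined with \Cref{lem:canonical} and \Cref{lem:top-even-coordinate}, an induction on $s$ should identify the top coefficient.

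Given the claim, grade $1$ is handled with $w(t)=1$, which gives $n\sim\binom{m}{t}\sim m^t/t!$, i.e.\ $m\sim (t!\,n)^{1/t}$.

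\textbf{Step 3: grade $2$.} When both $\vgamma^{2t}$ and $\vgamma^{2t+1}$ are needed, I would use two copies of each $t$-subset. Each copy is attached to a different residue pattern, say the $t$-subsets of $[m]$ built from two interleaved index structures, so that the two copies contribute the two different top vectors. This costs $n\sim 2m^t/t!$, i.e.\ $m\sim (t!\,n/2)^{1/t}$. Taking $m$ in the required residue class loses only a factor $1+o(1)$. Padding and the lower part also cost only $1+o(1)$, which yields the stated bounds.
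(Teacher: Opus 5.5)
\textbf{There is a genuine gap in Step 2, and it carries over to Steps 1 and 3.}

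Every family you build is of star type. Each point is a set $S\subseteq[m]$ with $|S|\le t$, and $F_i=\{S : i\in S\}$. So no point lies in more than $t$ of the $F_i$; your own formula reflects this, since $\binom{m-j}{s-j}=0$ for $j>s$. Consequently every parity vector you can produce vanishes on all coordinates $j>t$. This holds for any residue of $m$, any multiplicities and any padding.

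Since $(\vgamma^{2s})_j=\binom{s-1}{j-1}\bmod 2$ is supported on $[s]$ with $(\vgamma^{2s})_s=1$, the vectors $\vgamma^2_{[k]},\dots,\vgamma^{2t}_{[k]}$ form a triangular basis of the vectors supported on $[t]$. By uniqueness of the canonical decomposition (\Cref{lem:canonical}), your constructions therefore only reach $\spa_{\F_2}\{\vgamma^2_{[k]},\vgamma^4_{[k]},\dots,\vgamma^{2t}_{[k]}\}$, plus $\vgamma^1_{[k]}$ from eventowns. In particular, $u^{(t)}_m$ is always $\vgamma^{2t}$ plus lower even canonical vectors, and never involves $\vgamma^{2t+1}$ when $k\ge 2t+1$; no choice of residue changes this. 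So the following cases are not realized:
\begin{itemize}
\item grade-$1$ vectors with $c_{2t+1}=1$;
\item all grade-$2$ vectors;
\item via Step 1, which reuses the same constructions, every lower part containing some $\vgamma^{2s+1}$ with $s\ge 1$.
\end{itemize}
Already for $t=1$, $k=3$ your families give only $(w,0,0)$. Together with $\vgamma^1_{[3]}$ you reach $(0,0,0),(1,0,0),(0,1,1),(1,1,1)$, but none of $(1,0,1),(0,1,0),(0,0,1),(1,1,0)$, even though all four have linear $f_{\valpha}(n)$.

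\textbf{The missing ingredient} is a construction in which each point lies in all but boundedly many of the sets. Take the complements of stars, $\cF_{2t+1}(m)=\{\tbinom{[m]}{t}\setminus A^{(t)}_s : s\in[m]\}$. Their $j$-wise intersections have size $\binom{m-j}{t}$ for every $j$, not only $j\le t$. When $2^t\mid m$ this is $\binom{-j}{t}\equiv\binom{-(t+1)}{j-1}\pmod 2$, so the family is exactly a $\vgamma^{2t+1}$-town of size $m$ on $\binom{m}{t}$ points.

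Likewise, plain stars with $2^t\mid m$ give exactly $\vgamma^{2t}$. So the residue, Lucas and shift-identity analysis you flagged as the hardest step is unnecessary.

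With these families, apply \Cref{coro:Partition--Sum} term by term to the canonical decomposition, with all pieces of common size $m$. Lower-level terms cost $O(m^{t-1})=o(n)$ points. In grade $2$, the $\vgamma^{2t}$-town and the $\vgamma^{2t+1}$-town each cost $\sim m^t/t!$ points, which is where the factor $2$ comes from. Your count in Step 3 is right, but the second layer must be the complement family: any additional star-type layer is still subject to the support obstruction above.
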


We first construct \emph{canonical families} $\cG_i$, which are asymptotically largest $\vgamma^i$-towns for canonical vectors $\vgamma^i$. We then apply Partition--Sum Lemma to prove \Cref{thm:main_lower} in full generality. It is worth mentioning that throughout this section we always implicitly assume that $t = \lfloor i/2 \rfloor$. 

\smallskip
For $m \in \N_+$ and $t \in [m]$, we denote by $\binom{[m]}{t}$ the family of $t$-element subsets of $[m]$. 
\vspace{-0.5em}
\begin{itemize}
    \item For $s = 1, \dots, m$, we write $A^{(t)}_s \eqdef \Bigl\{ X \in \tbinom{[m]}{t} : s \in X \Bigr\}$. 
\end{itemize}
\vspace{-0.5em}
In other words, the \emph{star} $A^{(t)}_s$ consists of all $t$-element subsets of $[m]$ containing $s$. 

Inspired by the previous lower-bound constructions of Johnston--O'Neill \cite[Construction 3]{johnston_oneill} and Wei--Zhang--Ge \cite[Construction II.1]{wei_zhang_ge}, we introduce set families
\[
\cF_{2t}(m) \eqdef \Bigl\{ A^{(t)}_s : s \in [m] \Bigr\}, \qquad \cF_{2t+1}(m) \eqdef \Bigl\{ \tbinom{[m]}{t} \setminus A^{(t)}_s : s \in [m] \Bigr\}. 
\]
As previously remarked in \Cref{sec:intro}, there exists a $\vgamma^1$-town $\cF_1(m)$ of size $2^{\lfloor \frac{m-1}{2} \rfloor}$ for every $m$. 

\begin{proposition} \label{prop:realization}
    Suppose $t = \lfloor i/2 \rfloor \in \N_+$. If $m$ is divisible by $2^t$, then $\cF_i(m)$ is a $\vgamma^i$-town. 
\end{proposition}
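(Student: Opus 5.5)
The plan is a direct computation: count each $j$-wise intersection exactly, then reduce the count modulo $2$ using Lucas' theorem together with the hypothesis $2^t \mid m$. We take $m > t+1$ throughout, so that the $m$ stars $A^{(t)}_s$ are pairwise distinct, and so are their complements in $\binom{[m]}{t}$. Hence each $\cF_i(m)$ has exactly $m$ members, indexed by $s\in[m]$.

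\textbf{Case $i=2t$.} Fix $j\ge 1$ and distinct $s_1,\dots,s_j\in[m]$. The set $A^{(t)}_{s_1}\cap\dots\cap A^{(t)}_{s_j}$ consists of the $t$-subsets containing $\{s_1,\dots,s_j\}$, so its size is $\binom{m-j}{t-j}$. This is $0$ when $j>t$, in agreement with $(\vgamma^{2t})_j\=\binom{t-1}{j-1}=0$.

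For $1\le j\le t$, set $r=t-j$, so $0\le r<2^t$. By Lucas' theorem, the parity of $\binom{x}{r}$ depends only on the binary digits of $x$ below position $t$, that is, on $x \bmod 2^t$. The same holds for the extended binomial coefficient with $x$ negative: $\binom{x}{r}$ is a polynomial in $x$ with $2$-adic integral values, hence $2^t$-periodic modulo $2$ when $r<2^t$.

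Since $2^t\mid m$, this gives
\[
\binom{m-j}{t-j}\=\binom{-j}{t-j}=(-1)^{t-j}\binom{t-1}{t-j}\=\binom{t-1}{j-1},
\]
which is exactly $(\vgamma^{2t})_j$.

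\textbf{Case $i=2t+1$.} The intersection of the complements $\binom{[m]}{t}\setminus A^{(t)}_{s_\ell}$, for $\ell=1,\dots,j$, consists of the $t$-subsets avoiding all of $s_1,\dots,s_j$. Its size is $\binom{m-j}{t}$, valid for every $j$ including $j>t$. Since $t<2^t$, the same periodicity gives
\[
\binom{m-j}{t}\=\binom{-j}{t}=(-1)^t\binom{t+j-1}{t}\=\binom{t+j-1}{j-1},
\]
which equals $(\vgamma^{2t+1})_j=\binom{-(t+1)}{j-1}\bmod 2$ by the definition of the canonical vectors.

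The only step needing care is the periodicity claim for extended binomial coefficients at negative arguments. I would justify it by noting that $x\mapsto\binom{x}{r}\bmod 2$ is periodic with period $2^{\lceil\log_2(r+1)\rceil}$ on nonnegative integers, by Lucas' theorem. It then extends to all of $\Z$ because $\binom{x}{r}$ is a polynomial in $x$ with $2$-adically integral values, and the congruence $\binom{x+2^t}{r}\equiv\binom{x}{r}\pmod 2$ holds for infinitely many $x$. Alternatively, one can use the Vandermonde identity $\binom{x+2^t}{r}=\sum_{a}\binom{2^t}{a}\binom{x}{r-a}$ together with the fact that $\binom{2^t}{a}$ is even for $0<a<2^t$. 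The remaining sign and reindexing identities are routine.
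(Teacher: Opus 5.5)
Your proof is correct and follows the paper's route. You count the $j$-wise intersections exactly as $\binom{m-j}{t-j}$ and $\binom{m-j}{t}$, use $2^t \mid m$ to replace $m$ by $0$ modulo $2$, and match the results with $\binom{t-1}{j-1}$ and $\binom{-(t+1)}{j-1}$. The only difference is how you justify $\binom{m-j}{r}\equiv\binom{-j}{r} \pmod 2$: the paper uses $\nu_2(m)\ge t>\nu_2(t!)$, whereas you use Lucas/Vandermonde periodicity. Your Vandermonde argument is airtight, but the polynomial-extension remark needs ``for all $x\ge 0$'' (or $r+1$ consecutive $x$) rather than ``for infinitely many $x$'' to be valid.
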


\begin{proof}
    For $j = 1, \dots, t$, recall that $(\vgamma^{2t})_j \= \binom{t-1}{j-1}$ and $(\vgamma^{2t+1})_j \= \binom{-(t+1)}{j-1}$. By constructions, 
    \vspace{-0.5em}
    \begin{itemize}
        \item every $j$-wise intersection in $\cF_{2t}(m)$ has size $\binom{m-j}{t-j}$, and
        \vspace{-0.5em}
        \item every $j$-wise intersection in $\cF_{2t+1}(m)$ has size $\binom{m-j}{t}$. 
    \end{itemize}
    \vspace{-0.5em}
    Denote by $\nu_2(m)$ the largest integer $\lambda$ such that $2^{\lambda}$ divides $m$. Since $m$ is divisible by $2^t$, 
    \[
    \nu_2(m) \ge t \implies \nu_2(m) > \nu_2(t!) \implies \begin{cases}
        \binom{m-j}{t-j} \= \binom{-j}{t-j} = \binom{t-1}{j-1}, \\
        \binom{m-j}{t} \= \binom{-j}{t} = \binom{-(t+1)}{j-1}. 
    \end{cases} \qedhere
    \]
\end{proof}

\begin{proof}[Proof of \Cref{thm:main_lower}]
    We first construct $\cG_i$ using $\cF_i$. When $i = 1$, the families $\cG_1(n) \eqdef \cF_1(n)$ work. Suppose $i \ge 2$ then. Observe that $|\cF_i(m)| = m$ and $\cF_i(m)$ has ground set $\binom{[m]}{t}$. To obtain $\cG_i(n)$, we pick $m_{n, i}$ to be the largest integer with $\binom{m}{t} \le n$ and $\nu_2(m) \ge t$. It follows that
    \[
    m_{n, i} = \bigl( 1 + o(1) \bigr) \cdot (t!)^{1/t} \cdot n^{1/t}, 
    \]
    and so (by \Cref{prop:realization}) the $\vgamma^i$-town $\cG_i(n) \eqdef \cF_i(m_{n, i})$ implies $f_{\vgamma^i}(n) \ge \bigl( 1 + o(1) \bigr) \cdot (t!)^{1/t} \cdot n^{1/t}$. 

    In the general case, suppose $i_1 < \dots < i_{\ell}$ are indices such that
    \[
    \valpha = \vgamma^{i_1} + \dots + \vgamma^{i_{\ell}}
    \]
    is the canonical decomposition. Since $\lv(\valpha) = t$, we have that $i_{\ell} \in \{2t, 2t+1\}$. Moreover, 
    \vspace{-0.5em}
    \begin{itemize}
        \item if $\grd(\valpha) = 1$, then $i_{\ell-1} \le 2t - 1$ if it exists; 
        \vspace{-0.5em}
        \item if $\grd(\valpha) = 2$, then $i_{\ell-1} = 2t$ and $i_{\ell} = 2t + 1$. 
    \end{itemize}
    \vspace{-0.5em}
    Choose parameters $(x_1, \dots, x_{\ell})$ with $f_{\vgamma^{i_1}}(x_1) = \dots = f_{\vgamma^{i_{\ell}}}(x_{\ell}) \eqdef m$ such that $x_1 + \dots + x_{\ell} \le n$ is as large as possible. Then from the Partition--Sum Lemma (\Cref{coro:Partition--Sum}) we deduce that
    \[
    f_{\valpha}(n) \ge f_{\valpha}(x_1 + \dots + x_{\ell}) \ge m = \begin{cases}
        \bigl( 1 + o(1) \bigr) \cdot (t!)^{1/t} \cdot n^{1/t} \qquad &\text{if $\grd(\valpha) = 1$}, \\
        \bigl( 1 + o(1) \bigr) \cdot (t!/2)^{1/t} \cdot n^{1/t} \qquad &\text{if $\grd(\valpha) = 2$}, 
    \end{cases}
    \]
    where the ``$=$'' follows from an optimization on $m$. The proof of \Cref{thm:main_lower} is complete. 
\end{proof}

\section{The upper bounds in \texorpdfstring{\Cref{thm:main}}{Theorem 3}} \label{sec:upper}

In this section, we deduce the upper bounds restated below in \Cref{thm:main}. 

\begin{theorem} \label{thm:main_upper}
    Suppose $k \in \N_+$ and let $\valpha \in \F_2^k$ with $\lv(\valpha) = t > 0$. Then, as $n \to \infty$, we have
    \[
    f_{\valpha}(n) \le \begin{cases}
        \bigl( 1 + o(1) \bigr) \cdot (t!)^{1/t} \cdot n^{1/t} \qquad &\text{if $\grd(\valpha) = 1$}, \\
        \bigl( 1 + o(1) \bigr) \cdot (t!/2)^{1/t} \cdot n^{1/t} \qquad &\text{if $\grd(\valpha) = 2$}. 
    \end{cases}
    \]
\end{theorem}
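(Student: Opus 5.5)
The plan is to reduce to a few ``pure'' top-level vectors using the Partition--Sum Lemma, and then bound those by an $\F_2$-rank argument.

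\textbf{Reduction.} Let $\valpha$ have canonical decomposition \eqref{eq:canonical} with $\lv(\valpha)=t$. Split $\valpha=\vbeta^{\rm top}+\vbeta^{\rm low}$, where $\vbeta^{\rm low}$ collects the canonical vectors of level $<t$. Then
\[
\vbeta^{\rm top}\in\bigl\{(\vgamma^{2t})_{[k]},\ (\vgamma^{2t+1})_{[k]},\ (\vgamma^{2t}+\vgamma^{2t+1})_{[k]}\bigr\},
\]
and the first two occur exactly when $\grd(\valpha)=1$. Over $\F_2$ we have $\vbeta^{\rm top}=\valpha+\vbeta^{\rm low}$. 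The Partition--Sum Lemma, used as a triangle inequality, gives
\[
f_{\vbeta^{\rm top}}(n+n')\ \ge\ \min\{f_{\valpha}(n),\,f_{\vbeta^{\rm low}}(n')\}.
\]
By the lower bound (\Cref{thm:main_lower}) applied to $\vbeta^{\rm low}$, an $n'=O\bigl(f_\valpha(n)^{t-1}\bigr)=o(n)$ suffices to make $f_{\vbeta^{\rm low}}(n')\ge f_\valpha(n)$. (If $\vbeta^{\rm low}$ has level $0$, the eventown constructions give even smaller $n'$.) The case $f_\valpha(n)\le k$ is trivial. Hence $f_\valpha(n)\le f_{\vbeta^{\rm top}}\bigl((1+o(1))n\bigr)$. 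It therefore suffices to prove the upper bound for the three vectors $\vgamma^{2t}$, $\vgamma^{2t+1}$ and $\vgamma^{2t}+\vgamma^{2t+1}$, truncated to length $k\ge 2t$. If $k<2t$, the top vector would not fit, so this case does not arise.

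\textbf{Rank argument for the top vectors.} Let $\cF=\{F_1,\dots,F_m\}$ be a $\vbeta^{\rm top}$-town on $[n]$. For each $S\in\binom{[m]}{\le t}$ let $w_S\in\F_2^n$ be the indicator vector of $\bigcap_{i\in S}F_i$. Then
\[
\langle w_S,w_T\rangle \= \beta_{|S\cup T|},
\]
since $|S\cup T|\le 2t\le k$. Hence $n\ge \operatorname{rank}_{\F_2}(M)$, where $M$ is the Gram matrix with $M_{S,T}=\beta_{|S\cup T|}$. Using \Cref{lem:shift} and \Cref{lem:top-even-coordinate}, I would change basis in the space of functions of $|S\cup T|$, relating $\beta_{|S\cup T|}$ to the binomial expressions $\binom{|S\cap T|+c}{\cdot}$. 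This should rewrite $M$ as a combination of inclusion matrices $W_{\le t}^{\top}DW_{\le t}$ between $\binom{[m]}{\le t}$ and $\binom{[m]}{\le t}$. For $\vgamma^{2t}$ (respectively $\vgamma^{2t+1}$) the diagonal part $D$ should be nonzero on all $t$-subsets, modulo lower-order corrections. I then expect
\[
\operatorname{rank}M\ \ge\ \binom{m}{t}-O(m^{t-1})
\]
(Wilson-type rank results for inclusion matrices mod $2$). For $\vgamma^{2t}+\vgamma^{2t+1}$ the two top contributions should cancel to leave a skew or paired form of roughly half rank, giving $\operatorname{rank}M\ge\frac12\binom{m}{t}-O(m^{t-1})$. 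Inverting $n\ge(1-o(1))\binom{m}{t}$ yields $m\le(1+o(1))(t!\,n)^{1/t}$. Inverting $n\ge(\frac12-o(1))\binom{m}{t}$ yields $m\le(1+o(1))(t!\,n/2)^{1/t}$.

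\textbf{Main obstacle.} The crux is the $\F_2$-rank lower bound for $M$. The mod-$2$ rank of inclusion matrices depends on the $2$-adic behaviour of $m$, and a naive bound could lose a constant factor. I would first try passing to a subfamily of size $m'\in[m-2^t,m]$ with $\nu_2(m')\ge t$; \Cref{prop:realization} suggests this is the ``clean'' case. Then I would explicitly exhibit $\binom{m'}{t}-O(m'^{t-1})$ (respectively half as many) rows of $M$ that, together with lower-level rows, form a triangular system. The grade-$2$ half-rank bound looks the most delicate. If it resists, a fallback is induction on $t$ via links: for fixed $F\in\cF$, the traces $\{G\cap F\}$ form a $\sigma(\vbeta)$-town and the differences $\{G\setminus F\}$ form a $(\vbeta+\sigma(\vbeta))$-town, and both have lower level by \Cref{lem:shift}. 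This gives $|F|\gtrsim m^{t-1}/(t-1)!$, but combining it into the sharp constant would still need a careful double count.
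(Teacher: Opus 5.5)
\textbf{The grade-$2$ rank bound fails, not merely unproven.} Your matrix $M_{S,T}=\beta_{|S\cup T|}$, with $1\le |S|,|T|\le t$, only involves $\beta_1,\dots,\beta_{2t}$. By the proof of \Cref{lem:top-even-coordinate}, the truncation $(\vgamma^{2t}+\vgamma^{2t+1})_{[2t]}$ lies in the span of $\vgamma^1_{[2t]},\dots,\vgamma^{2t-1}_{[2t]}$. For $2\le i\le 2t-1$ with $s=\lfloor i/2\rfloor$, the town $\cF_i(m'')$ (any $m''\ge m$ with $2^s\mid m''$) lives on a ground set of size $\binom{m''}{s}$. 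So the matrix $(\vgamma^i_{|S\cup T|})_{S,T}$ is a Gram matrix of vectors in $\F_2^{\binom{m''}{s}}$, while $\vgamma^1$ contributes the rank-one all-ones matrix. By subadditivity of rank, $\operatorname{rank} M=O(m^{t-1})$, so the hoped-for bound $\operatorname{rank}M\ge \frac12\binom{m}{t}-O(m^{t-1})$ is false. The smallest case makes this vivid: for $t=1$ and $\vbeta=(0,0,1)$ your $M$ is identically zero, although $f_{(0,0,1)}(n)\approx n/2$. The grade-$2$ information sits in coordinate $2t+1$, which no form indexed by $\binom{[m]}{\le t}$ can see. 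The two top contributions cancel completely, not to half rank.

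\textbf{The missing idea} is the paper's halving trick, which your fallback nearly reaches. First add lower-level canonical vectors (harmless by \Cref{lem:replacement}) to get $\valpha'$ with $\valpha'_{[3]}=(0,0,1)$; this forces the traces below to be distinct. Then fix one member $F_0$ of the town. The sets $F_0\cap F$ form a $\sigma(\valpha')$-town on $F_0$, and the sets $F_0^{\sc}\cap F$ form a $(\valpha'_{[k-1]}+\sigma(\valpha'))$-town on $F_0^{\sc}$. By \Cref{lem:shift} both vectors have level exactly $t$ and grade $1$, contrary to your claim that the link towns drop in level. Apply the grade-$1$ bound on whichever of $F_0,F_0^{\sc}$ has at most $n/2$ elements; this gives $m\le(1+o(1))(t!\,n/2)^{1/t}$ with no double counting.

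For grade $1$ your rank approach can be completed, but the step you leave as an expectation needs one concrete fact rather than Wilson-type rank theory. That fact is the paper's $\lambda_x=0$ observation: $\vgamma^x_{[2t]}$ ($x\in\{2t,2t+1\}$) differs by lower-level canonical vectors from some $\valpha^*$ with $\valpha^*_t=1$ and $\valpha^*_{t+1}=\dots=\valpha^*_{2t}=0$. The $\binom{[m]}{t}$-block of the Gram matrix of $\valpha^*$ is the identity. Together with the low-rank bound above, this gives $\operatorname{rank}M\ge\binom{m}{t}-O(m^{t-1})$, which is the paper's oddtown argument on $t$-wise intersections in matrix form.

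Finally, your reduction step is circular as written: $n'=O(f_\valpha(n)^{t-1})=o(n)$ presupposes $f_\valpha(n)=o(n^{1/(t-1)})$. Instead, choose $n'=n^{1-\delta/t}$ in advance. Then the $O(n^{1/t})$ bound on the top vector shows that the minimum in the Partition--Sum inequality is $f_\valpha(n)$, as in \Cref{lem:replacement}.
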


To prove \Cref{thm:main_upper}, the following replacement lemma is crucial. 

\begin{lemma} \label{lem:replacement}
    Suppose $k \in \N_+$ and let $\valpha, \vbeta \in \F_2^k$ with $\lv(\valpha) = t > s = \lv(\vbeta)$. Then, 
    \vspace{-0.5em}
    \begin{itemize}
        \item given that $f_{\valpha+\vbeta}(n) \le \bigl( c + o(1) \bigr) \cdot n^{1/t}$ as $n \to \infty$, we have $f_{\valpha}(n) \le \bigl( c + o(1) \bigr) \cdot n^{1/t}$ as $n \to \infty$. 
    \end{itemize}
    \vspace{-0.5em}
\end{lemma}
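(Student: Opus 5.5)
We argue by contradiction using the Partition--Sum Lemma as a triangle inequality. Write $\valpha = (\valpha + \vbeta) + \vbeta$. If $f_{\valpha}(n)$ were large, we would combine a large $\valpha$-town on most of the ground set with a $\vbeta$-town on a negligible portion of the ground set. The result would be an $(\valpha+\vbeta)$-town that is too large. The point is that $\vbeta$ has strictly smaller level $s < t$, so $\vbeta$-towns of any prescribed size $m$ need only a ground set of size $o(m^t)$. This vanishing cost is what keeps the constant $c$ intact.

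The first step is a quantitative lower bound for $\vbeta$-towns. If $s = 0$, then $\vbeta$ is either all-$0$ or all-$1$, so $f_{\vbeta}(y) \ge 2^{\lfloor (y-1)/2 \rfloor}$. A ground set of size $y = O(\log m)$ therefore suffices to get $m$ sets. If $s \ge 1$, \Cref{thm:main_lower} gives $f_{\vbeta}(y) \ge (c_s + o(1))\, y^{1/s}$ for some $c_s > 0$. A $\vbeta$-town of size at least $m$ then exists on a ground set of size $y(m) = O(m^s)$. Since $s < t$, in both cases $y(m) = o(m^t)$ as $m \to \infty$. We should also note that $f_{\vbeta}$ is nondecreasing in the ground set size, and that a subfamily of a town is still a town. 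This lets us trim to exactly the size we want.

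The second step is the main argument. Suppose the conclusion fails. Then there are $\varepsilon > 0$ and infinitely many $n$ with $f_{\valpha}(n) \ge (c + \varepsilon) n^{1/t}$. Fix such an $n$ and put $m \eqdef f_{\valpha}(n)$, which exceeds $k$ once $n$ is large. Choose $y = y(m)$ with $f_{\vbeta}(y) \ge m$. By \Cref{lem:Partition--Sum},
\[
f_{\valpha+\vbeta}(n + y) \ge \min\bigl\{ f_{\valpha}(n), f_{\vbeta}(y) \bigr\} = m \ge (c+\varepsilon) n^{1/t}.
\]
We need $m \ge (c+\varepsilon) n^{1/t}$ and $m = O(n^{1/t})$. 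The latter follows from the hypothesis applied through the same inequality, or from a crude polynomial upper bound on $f_{\valpha}$. Hence $y = o(m^t) = o(n)$, so $n + y = (1+o(1))\, n$ and $(n+y)^{1/t} = (1+o(1))\, n^{1/t}$. This gives $f_{\valpha+\vbeta}(n+y) \ge (c + \varepsilon - o(1))\,(n+y)^{1/t}$ along an infinite sequence of ground-set sizes $n + y \to \infty$. That contradicts the hypothesis $f_{\valpha+\vbeta}(N) \le (c+o(1)) N^{1/t}$.

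The main obstacle is the bookkeeping in the middle. We must ensure that $m$ is not so large that $y(m)$ stops being $o(n)$. If $m \gg n^{1/t}$, then $y$ could be comparable to $n$. The clean fix is to first truncate the $\valpha$-town to exactly $m' \eqdef \lceil (c+\varepsilon) n^{1/t} \rceil$ sets; any subfamily of an $\valpha$-town is still an $\valpha$-town. We then run the argument with $m'$ in place of $m$. Then $y(m') = O(n^{s/t}) = o(n)$, or $O(\log n)$ when $s = 0$, with no a priori upper bound needed. Beyond that, it only remains to check the harmless requirement $m' > k$ in \Cref{lem:Partition--Sum}.
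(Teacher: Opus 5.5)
Your proof is correct and is essentially the paper's argument: a Partition--Sum comparison between $\valpha$ and $\vbeta$, with the $\vbeta$-town placed on a sublinear ground set, made cheap by \Cref{thm:main_lower} (or the eventown bound when $s=0$). The only difference is bookkeeping. The paper argues directly rather than by contradiction. It fixes the $\vbeta$-ground set size in advance as $n^{(t-\delta)/t}=o(n)$, so that $f_{\vbeta}$ there is already $\omega(n^{1/t})$ and the minimum in the Partition--Sum Lemma is forced to be $f_{\valpha}(n)$, which makes your truncation step unnecessary.
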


\begin{proof}
    Arbitrarily pick a parameter $\delta \in (0, 1)$ and set $m \eqdef n^{(t-\delta)/t}$. For $s > 0$ and $n \to \infty$, we have
    \[
    m = o(n) \qquad \text{and} \qquad m^{1/s} \ge m^{1/(t-1)} = \omega (n^{1/t}). 
    \]
    It then follows from the Partition--Sum Lemma applied to $\valpha + \vbeta$ and $n + m$ that
    \[
    \bigl( c + o(1) \bigr) \cdot n^{1/t} = \bigl( c + o(1) \bigr) \cdot (n+m)^{1/t} \ge f_{\valpha+\vbeta}(n+m) \ge \min \bigl\{ f_{\valpha}(n), f_{\vbeta}(m) \bigr\}.
    \]
    
    \newpage
    We know from \Cref{thm:main_lower} that $f_{\vbeta}(m) = \Omega(m^{1/s})$, and hence $f_{\vbeta}(m) = \omega(n^{1/t})$\footnote{If $s = 0$, then $m^{1/s}$ is not well-defined. Nevertheless, in this case we still have $f_{\vbeta}(m) = \Omega(2^{m/2}) = \omega(n^{1/t})$.}. Therefore, 
    \[
    f_{\valpha}(n) \le \bigl( c + o(1) \bigr) \cdot n^{1/t}. \qedhere
    \]
\end{proof}


\begin{proof}[Proof of \Cref{thm:main_upper}]
    We analyze two cases on $\grd(\valpha)$ separately. 
    
	
	
	\smallskip
	\noindent\textbf{Case 1. $\grd(\valpha) = 1$.} 
    \smallskip

    We first treat the case $\valpha_t = 1$ and $\valpha_{t+1} = \dots = \valpha_{2t} = 0$. This follows from an argument of Wei, Zhang, and Ge \cite[Lemma IV.1]{wei_zhang_ge}. For completeness, we spell out the details below. 
    
	Let $\cF = \{F_1, \dots, F_m\} \subseteq 2^{[n]}$ be an $\valpha$-town. Consider the family of $t$-wise intersections
	\[
    \mathcal G \eqdef \biggl\{ \bigcap_{j \in T} F_j : T \in \tbinom{[m]}{t} \biggr\}. 
    \]
	We claim that all members of $\cG$ are distinct, and so $|\cG| = \binom{m}{t}$. Indeed, if $\bigcap\limits_{j \in T} F_j = \bigcap\limits_{j \in T'} F_j$ for some $T \ne T'$, then intersecting both sides with $F_r \, (r \in T \bigtriangleup T')$ yields an equality between a $(t+1)$-wise intersection and a $t$-wise intersection of distinct sets, contradicting $\valpha_t = 1$ while $\valpha_{t+1} = 0$. 
	
	For distinct $G, G' \in \mathcal G$, their intersection is an $\ell$-wise intersection of distinct members of $\cF$, where $t + 1 \le \ell \le 2t$. It then follows from $\valpha_{t+1} = \dots = \valpha_{2t} = 0$ that $|G \cap G'|$ is even. Consequently, $\cG$ is an oddtown on $[n]$, hence $\tbinom{m}{t} = |\cG| \le n$. This implies that $m \le \bigl( 1 + o(1) \bigr) \cdot (t!n)^{1/t}$. Consequently, 
    \[
    f_{\valpha}(n) \le \bigl( 1 + o(1) \bigr) \cdot (t!)^{1/t} \cdot n^{1/t}. 
    \]

    We then reduce the general case to the special case above. We have canonical decomposition
    \[
    \valpha = c_1 \cdot \vgamma^1_{[k]} + \dots + c_{2t-1} \cdot \vgamma^{2t-1}_{[k]} + 1 \cdot \vgamma^x_{[k]}, 
    \]
    where $x \in \{2t, 2t+1\}$. Introduce an auxiliary vector $\valpha^* \in \F_2^k$ such that
    \[
    \valpha^*_{[2t]} \eqdef \bigl( \underbrace{1, \dots, 1}_t, \underbrace{0, \dots, 0}_{t} \bigr). 
    \]
    We remark that only the last $t + 1$ coordinates $\valpha^*_t = 1$ and $\valpha^*_{t+1} = \dots = \valpha^*_{2t} = 0$ matter. Due to \Cref{lem:top-even-coordinate}, the vectors $\vgamma^1_{[2t]}, \dots, \vgamma^{2t-1}_{[2t]}, \vgamma^x_{[2t]}$ span $\F_2^{2t}$. Hence, we have linear decomposition
    \[
    \valpha_{[2t]} + \valpha^*_{[2t]} = \lambda_1 \cdot \vgamma^1_{[2t]} + \dots + \lambda_{2t-1} \cdot \vgamma^{2t-1}_{[2t]} + \lambda_x \cdot \vgamma^x_{[2t]}, 
    \]
    where $\lambda_1, \dots, \lambda_{2t-1}, \lambda_x \in \F_2$ are uniquely determined. Complete the construction of $\valpha^*$ by setting
    \[
    \valpha^* \eqdef \valpha + \lambda_1 \cdot \vgamma^1_{[k]} + \dots + \lambda_{2t-1} \cdot \vgamma^{2t-1}_{[k]} + \lambda_x \cdot \vgamma^x_{[k]}. 
    \]
    
    A crucial observation is that $\lambda_x = 0$. If not, then $\lambda_x = 1$, and so $\valpha^*_{[2t]} \in \spa_{\F_2} \bigl\{ \vgamma^1_{[2t]}, \dots, \vgamma^{2t-1}_{[2t]} \bigr\}$. This implies that $\lv(\valpha^*_{[2t]}) \le t - 1$, and hence we obtain $f_{\valpha^*_{[2t]}} = \Omega(n^{1/(t-1)})$, thanks to \Cref{thm:main_lower}. However, it follows from the coordinates $\valpha^*_t = 1$ and $\valpha^*_{t+1} = \dots = \valpha^*_{2t} = 0$ that $f_{\valpha^*_{[2t]}} = O(n^{1/t})$, a contradiction. Therefore, we see that $\lambda_x = 0$, and hence $\valpha^* = \valpha + \lambda_1 \cdot \vgamma^1_{[k]} + \dots + \lambda_{2t-1} \cdot \vgamma^{2t-1}_{[k]}$. 

    Finally, by \Cref{lem:replacement} applied to $\vbeta \eqdef \lambda_1 \cdot \vgamma^1_{[k]} + \dots + \lambda_{2t-1} \cdot \vgamma^{2t-1}_{[k]}$, we conclude that
    \[
    f_{\valpha}(n) \le \bigl( 1 + o(1) \bigr) \cdot (t!)^{1/t} \cdot n^{1/t}. 
    \]
    
	\smallskip
	\noindent\textbf{Case 2. $\grd(\valpha) = 2$.} 
    \smallskip
    
	We first claim that there exists $\vbeta \in \spa_{\F_2} \bigl\{ \vgamma^1_{[k]}, \dots, \vgamma^{2t-1}_{[k]} \bigr\}$ such that $\valpha' \eqdef \valpha + \vbeta$ satisfies
    \[
    \valpha'_1 = \valpha'_2 = 0, \qquad \valpha'_3 = 1. 
    \]
    We verify this by considering the following cases for $t$. 
    \vspace{-0.5em}
    \begin{itemize}
        \item If $t \ge 2$, then \Cref{lem:canonical} shows that $\vgamma^1_{[3]}, \vgamma^2_{[3]}, \vgamma^3_{[3]}$ span $\F_2^3$. So, there exist $\mu_1, \mu_2, \mu_3 \in \F_2$ with
        \[
        (0, 0, 1) - \valpha_{[3]} = \mu_1 \cdot \vgamma^1_{[3]} + \mu_2 \cdot \vgamma^2_{[3]} + \mu_3 \cdot \vgamma^3_{[3]}, 
        \]
        and hence $\vbeta \eqdef \mu_1 \cdot \vgamma^1_{[k]} + \mu_2 \cdot \vgamma^2_{[k]} + \mu_3 \cdot \vgamma^3_{[k]}$ satisfies $(\valpha + \vbeta)_{[3]} = (0, 0, 1) \in \F_2^3$. 
        \vspace{-0.5em}
        \item If $t = 1$, then there exists $\mu \in \F_2$ such that
        \[
        \valpha_{[3]} = \mu \cdot \vgamma^1_{[3]} + \vgamma^2_{[3]} + \vgamma^3_{[3]} = \mu \cdot (1, 1, 1) + (0, 0, 1),
        \]
        and so $\vbeta \eqdef \mu \cdot \vgamma^1_{[k]}$ satisfies $(\valpha + \vbeta)_{[3]} = (0, 0, 1) \in \F_2^3$. 
    \end{itemize}
    \vspace{-0.5em}

    Thanks to \Cref{lem:replacement}, it then suffices to show that $f_{\valpha'}(n) \le \bigl( 1 + o(1) \bigr) \cdot (t!/2)^{1/t} \cdot n^{1/t}$. 

    We extend the definition of shifting operator $\sigma$ by denoting $\sigma(\vgamma) \eqdef (\vgamma_2, \dots, \vgamma_{k}) \in \F_2^{k-1}$ for each $\vgamma \in \F_2^k$. Crucially, we may deduce via \Cref{lem:shift} that both $\sigma(\valpha')$ and $\valpha'_{[k-1]} + \sigma(\valpha')$ have level $t$ and grade $1$. Indeed, the shifting $\sigma$ and truncating $(\bullet)_{[k-1]}$ are linear operators with the property that
    \vspace{-0.5em}
    \begin{itemize}
        \item the identity $\sigma(\vgamma_{[k]}) = \sigma(\vgamma)_{[k-1]}$ holds for every $\vgamma \in \F_2^{\otimes\omega}$. 
    \end{itemize}
    \vspace{-0.5em}
    Since $\valpha'$ has level $t$ and grade $2$, by introducing unspecified $\bm{*}_{\ell} \in \spa_{\F_2} \bigl\{ \vgamma^1_{[\ell]}, \dots, \vgamma^{2t-1}_{[\ell]} \bigr\}$ we compute
    \begin{align*}
        \valpha' = \bm{*}_k + 1 \cdot \vgamma^{2t}_{[k]} + 1 \cdot \vgamma^{2t+1}_{[k]} \implies \sigma(\valpha') &= \sigma(\bm{*}_k) + \sigma(\vgamma^{2t}_{[k]}) + \sigma(\vgamma^{2t+1}_{[k]}) \\
        &= \sigma(\bm{*}_k) + \sigma(\vgamma^{2t})_{[k-1]} + \sigma(\vgamma^{2t+1})_{[k-1]} \\
        &= \bm{*}_{k-1} + \bm{*}_{k-1} + \bigl( \bm{*}_{k-1} + \vgamma^{2t+1}_{[k-1]} \bigr) \qquad \text{by \Cref{lem:shift}} \\
        &= \bm{*}_{k-1} + 0 \cdot \vgamma^{2t}_{[k-1]} + 1 \cdot \vgamma^{2t+1}_{[k-1]}. 
    \end{align*}
    Since $\valpha'_{[k-1]} = \bm{*}_{k-1} + 1 \cdot \vgamma^{2t}_{[k-1]} + 1 \cdot \vgamma^{2t+1}_{[k-1]}$, we obtain $\valpha'_{[k-1]} + \sigma(\valpha') = \bm{*}_{k-1} + 1 \cdot \vgamma^{2t}_{[k-1]} + 0 \cdot \vgamma^{2t+1}_{[k-1]}$. 

    \smallskip
    Let $\cG = \{F_0, F_1, \dots, F_m\} \subseteq 2^{[n]}$ be an $\valpha'$-town with $\cF \eqdef \{F_1, \dots, F_m\}$. Write $F_0^{\sc} \eqdef [n] \setminus F_0$ and
    \[
    \cG_1 \eqdef F_0 \cap \cF = \bigl\{ F_0 \cap F : F \in \cF \bigr\}, \qquad \cG_2 \eqdef F_0^{\sc} \cap \cF = \bigl\{ F_0^{\sc} \cap F : F \in \cF \bigr\}. 
    \]
    By construction we have that $\cG_1$ is a $\sigma(\valpha')$-town on $F_0$, and that $\cG_2$ is an $\bigl( \valpha'_{[k-1]} + \sigma(\valpha') \bigr)$-town on $F_0^{\sc}$. The latter is implied by $(F_0^{\sc} \cap F_{i_1}) \cap \dots \cap (F_0^{\sc} \cap F_{i_j}) = (F_{i_1} \cap \dots \cap F_{i_j}) \setminus (F_0 \cap F_{i_1} \cap \dots \cap F_{i_j})$. 

    Since $\valpha'_{[3]} = (0, 0, 1)$, we have $\sigma(\valpha')_1 \ne \sigma(\valpha')_2$ and $\bigl( \valpha'_{[k-1]} + \sigma(\valpha') \bigr)_1 \ne \bigl( \valpha'_{[k-1]} + \sigma(\valpha') \bigr)_2$. Thus, a similar argument as in \textbf{Case 1} shows that all members of $\cG_1$ and all members of $\cG_2$ are distinct. Because both $\sigma(\valpha')$ and $\valpha'_{[k-1]} + \sigma(\valpha')$ have level $t$ and grade $1$, the conclusion of \textbf{Case 1} implies
    \[
    m = |\cG_1| \le \bigl( 1 + o(1) \bigr) \cdot (t!)^{1/t} \cdot |F_0|^{1/t}, \qquad m = |\cG_2| \le \bigl( 1 + o(1) \bigr) \cdot (t!)^{1/t} \cdot |F_0^{\sc}|^{1/t}. 
    \]
    From $\min \bigl\{ |F_0|, |F_0^{\sc}| \bigr\} \le n/2$ we obtain $|\cG| = m + 1 \le \bigl( 1 + o(1) \bigr) \cdot (t!)^{1/t} \cdot (n/2)^{1/t}$. Consequently, 
    \[
    f_{\valpha'}(n) \le \bigl( 1 + o(1) \bigr) \cdot (t!/2)^{1/t} \cdot n^{1/t}. 
    \]
    
	\smallskip
    Combining \textbf{Case 1} and \textbf{Case 2} above completes the proof of \Cref{thm:main_upper}. 
\end{proof}

\section{Proof of \texorpdfstring{\Cref{thm:ville}}{Theorem 4}} \label{sec:proof}

Recall that $\valpha_{\ell}^k = \bigl( \underbrace{*, \dots, *}_{\ell-1}, 1, \underbrace{*, \dots, *}_{k-\ell} \bigr) \in \wF_2^k$. We prove two directions separately. 

\paragraph{Proof that \texorpdfstring{$g_{\valpha_{\ell}^k}(n)$}{g(n)} is unbounded if \texorpdfstring{$\ell$}{l} is not a power of \texorpdfstring{$2$}{2}.} There exists $t \in [\ell-1]$ such that $\binom{-\ell}{t} \= 1$. Choose $m$ to be a sufficiently large integer which is divisible by $2^t$.

Consider the canonical construction $\cF \eqdef \cF_{2t}(m)$ from \Cref{sec:lower}. That is, 
\[
A^{(t)}_s \eqdef \Bigl\{ X \in \tbinom{[m]}{t} : s \in X \Bigr\}, \qquad \cF_{2t}(m) \eqdef \Bigl\{ A^{(t)}_s : s \in [m] \Bigr\}. 
\]
For any distinct $F_1, \dots, F_{\ell} \in \cF$, from $t < \ell$ and $\nu_2(m) \ge t > \nu_2(t!)$ we deduce that
\[
|F_1 \cap \dots \cap F_{\ell}| = 0, \qquad |F_1 \cup \dots \cup F_{\ell}| = \tbinom{m}{t} - \tbinom{m-\ell}{t} \= \tbinom{-\ell}{t} \= 1. 
\]
It follows that $|F_1 \cap \dots \cap F_{\ell}| + |F_1 \cup \dots \cup F_{\ell}| \= 0 + 1 \= 1$. Moreover, we have $|\cF_{2t}(m)| = m$. 

Upon setting $m$ to be the largest integer such that $\binom{m}{\ell} \le n$, we obtain $g_{\valpha_{\ell}^k}(n) = \Omega(n^{1/\ell})$. 

\paragraph{Proof that \texorpdfstring{$g_{\valpha_{\ell}^k}(n)$}{g(n)} is bounded if \texorpdfstring{$\ell$}{l} is a power of \texorpdfstring{$2$}{2}.} Assume $\ell = 2^t$, where $t \in \N_+$. 

Let $\cF = \{F_1, \dots, F_m\}$ be an $\valpha_{\ell}^k$-ville. Take a coloring $\chi \colon \binom{\cF}{\ell} \to \{0, 1\}^D$, where $D \eqdef 2^\ell - 1$ and
\vspace{-0.5em}
\begin{itemize}
    \item each $\ell$-subset $\{F_{i_1}, \dots, F_{i_\ell}\} \subseteq \cF$ with $i_1 < \cdots < i_\ell$ is assigned a $\{0, 1\}$-vector of length $2^\ell - 1$ that records, in a fixed order, the parities of all nonempty intersections among $F_{i_1}, \dots, F_{i_\ell}$.
\end{itemize}
\vspace{-0.5em}
Thanks to Ramsey's theorem, there exists some sufficiently large $M_\ell > 0$ such that if $m > M_\ell$ then $\cF$ contains a subfamily $\cG = \{G_1, \dots, G_{10\ell^2}\}$ in which every $\ell$-subset of $\cG$ receives the same color $\tau$. 

We are going to bound $|\cF|$ by $M_{\ell}$, implying that $g_{\valpha_{\ell}^k}(n) \le M_\ell$. 

We first show that $\tau$ is \emph{symmetric}: 
\vspace{-0.5em}
\begin{itemize}
    \item for $s = 1, \dots, \ell$, all coordinates of $\tau$ corresponding to $s$-wise intersections are equal. 
\end{itemize}
\vspace{-0.5em}
Fix $s \in [\ell]$ and consider the distinguished $s$-tuple $(H_1, H_2, \dots, H_s) \eqdef (G_\ell, G_{2\ell}, \dots, G_{s\ell})$ which exists since $|\cG| = 10\ell^2 \ge s\ell$. We verify that $(H_1, \dots, H_s)$ can occupy any positions within an $\ell$-subset of $\cG$. This implies the desired symmetry, since every $\ell$-subset of $\cG$ receives the same color $\tau$. For any $I = \{i_1, \dots, i_s\} \subseteq [\ell]$ with $i_1 < \dots < i_s$, define an $\ell$-subset $\cG_I \subseteq \cG$ as follows: 
\vspace{-0.5em}
\begin{itemize}
    \item arrange its elements in increasing order of indices, place $H_r$ in the $i_r$-th position, and fill the other $\ell - s$ positions with arbitrary distinct elements of $\{G_1, \dots, G_{(s+1)\ell}\} \setminus \{H_1, \dots, H_s\}$. 
\end{itemize}
\vspace{-0.5em}
This is possible because there are $\ell - 1$ available fillers between any consecutive $H_i, H_{i+1}$, as well as before $H_1$ and after $H_s$. Since each $\ell$-subset of $\cG$ receives color $\tau$, it follows that $\cG_I$ has color $\tau$, and so the $s$-wise coordinate of $\tau$ indexed by $I$ equals $\bigl| \bigcap_{r=1}^s H_r \bigr| \bmod 2$, which is independent of $I$. As $I$ ranges over all $\binom{\ell}{s}$ choices, all $s$-wise coordinates of $\tau$ coincide, proving the desired symmetry.

We then show that $m \le M_\ell$. Fix any distinct $X_1, \dots, X_\ell \in \cG$ and write for $i = 1, \dots, \ell$
\[
x_i \eqdef \bigl| (X_1 \cap \dots \cap  X_i) \setminus (X_{i+1} \cup \dots \cup X_\ell) \bigr|, 
\]
which are the sizes of Venn-diagram cells grouped by depth. The symmetry of $\tau$ yields
\[
|X_1 \cap \dots \cap X_\ell| + |X_1 \cup \dots \cup X_\ell| \overset{\F_2}{=} x_\ell + \Bigl( \tbinom{\ell}{0} \cdot x_\ell + \tbinom{\ell}{1} \cdot x_{\ell-1} + \dots + \tbinom{\ell}{\ell-1} \cdot x_1 \Bigr). 
\]
Since $\ell$ is a power of $2$, the binomial coefficients $\binom{\ell}{1}, \dots, \binom{\ell}{\ell-1}$ are all even, and so the right-hand side is $0$ in $\F_2$, contradicting the assumed oddness of $\valpha_{\ell}^k$. We thus conclude that $m \le M_\ell$. 

\section{Concluding remarks} \label{sec:remark}

We now introduce a general framework encompassing towns and villes. Let $m \ge 2$ and $k \ge 2$ be integers, and let $\vI = (I_1, \dots, I_k) \in \bigl( 2^{\Z/m\Z} \bigr)^k$ be a sequence of subsets of $\Z/m\Z$. A family $\cF \subseteq 2^{[n]}$ is called an $\vI$-town (resp.~$\vI$-ville) if, for each $j = 1, \dots, k$, every $j$-wise intersection (resp.~intersection-union) of distinct sets in $\cF$ has cardinality modulo $m$ lying in $I_j$. Denote by $f_{m, \vI}(n)$ (resp.~$g_{m, \vI}(n)$) the maximum size of an $\vI$-town (resp.~$\vI$-ville) on $[n]$. The ultimate goal is to determine the sharp asymptotic behavior of $f_{m, \vI}(n)$ and $g_{m, \vI}(n)$ as $n \to \infty$, for every $m$ and every $\vI$. 

\smallskip

Our main result, \Cref{thm:main}, resolves the $f_{2,\vI}(n)$ problem in the case where each $I_j$ is a singleton. However, the asymptotic behavior of $f_{2, \vI}(n)$ for general $\vI$ (equivalently, $f_{\valpha}(n)$ for $\valpha \in \wF_2^k$) remains much less understood and appears to merit further investigation. 

\begin{problem}
    Study the sharp asymptotic behavior of $f_\valpha(n)$ for every $k \in \N_+$ and every $\valpha \in \wF_2^k$. 
\end{problem}

As for the function $g$, we have studied the problem of determining $g_{2,\vI}(n)$ in the case where all but one of the $I_j$ are equal to $\Z/2\Z$, showing that $g$ is unbounded unless the exceptional coordinate is a power of $2$. However, the correct asymptotics of those unbounded $g$ remain unknown. 

\begin{problem}
    Find the sharp asymptotic behavior of $g_{\valpha_\ell^k}(n)$ when $\ell$ is not a power of $2$. 
\end{problem}

For the more general function $f_{m,\vI}(n)$, very little is known. Johnston and O'Neill~\cite{johnston_oneill} deduced
\[
\Omega(n) \le f_{3,(0,0,\{1,2\})}(n) \le O(n^2),
\]
and we reiterate their open problem~\cite[Problem 1]{johnston_oneill} on the correct asymptotics of this quantity. 

\begin{problem}
    Determine the sharp asymptotic behavior of $f_{3, \vI}(n)$ for $\vI = (0, 0, \{1, 2\})$. 
\end{problem}

\section*{Acknowledgments}

This work was initiated at the $3^{\text{rd}}$ ECOPRO Student Research Program in the summer of 2025. LW would like to thank ECOPRO for hosting them. Part of this work was done during a visit of ZD and MO to Shandong University. They are thankful to Guanghui Wang for his support. ZD and LW also thank Dilong Yang for insightful discussions during the early stages of this project. 


\section*{Disclosure}

The authors used Chat GPT Pro for language editing assistance in the preparation of this paper. All mathematical results, proofs, and arguments were developed by the authors. 

\bibliographystyle{abbrv}
\bibliography{odd_even_town}

\end{document}